\definecolor{mygray}{gray}{0.85}
\renewcommand{\leq}{\leqslant}
\renewcommand{\geq}{\geqslant}
\renewcommand{\nleq}{\nleqslant}
\def\subsection{\@startsection{subsection}{3}%
  \z@{.5\linespacing\@plus.7\linespacing}{.3\linespacing}%
  {\bfseries\centering}}
\def\subsubsection{\@startsection{subsubsection}{3}%
  \z@{.5\linespacing\@plus.7\linespacing}{.3\linespacing}%
  {\centering}}
\def\myfnt{\ifx\protect\@typeset@protect\expandafter\footnote\else\expandafter\@gobble\fi}
\newtheorem{theorem}{Theorem}[section]
\newtheorem{corollary}[theorem]{Corollary}
\newtheorem{definition}[theorem]{Definition}
\newtheorem{lemma}[theorem]{Lemma}
\newtheorem{proposition}[theorem]{Proposition}
\newtheorem{example}[theorem]{Example}
\newtheorem{problem}[theorem]{Problem}
\newtheorem{question}[theorem]{Question}
\newtheorem{fact}[theorem]{Fact}
\newtheorem{notation}[theorem]{Notation}
\newtheorem{conjecture}[theorem]{Conjecture}
\newcommand{\mrm}[1]{\mathrm{#1}}
\newcommand{\mbf}{\mathbf}
\newcommand{\mbb}{\mathbb}
\newcommand{\op}{\operatorname}
\newcommand{\join}{\vee}
\renewcommand{\Join}{\bigvee}
\newcommand{\meet}{\wedge}
\begin{document}

\begin{abstract} We start a systematic analysis of the first-order model theory of free lattices. 
Firstly, we prove that the free lattices of finite rank are not positively indistinguishable, as there is a positive $\exists \forall$-sentence true in $\mbf F_3$ and false in $\mbf F_4$. Secondly, we show that every model of $\mrm{Th}(\mbf F_n)$ admits a canonical homomorphism into the profinite-bounded completion $\mbf H_n$ of $\mbf F_n$. Thirdly, we show that $\mbf H_n$ is isomorphic to the Dedekind-MacNeille completion of $\mbf F_n$, and that $\mbf H_n$ is not positively elementarily equivalent to $\mbf F_n$, as there is a positive $\forall\exists$-sentence true in $\mbf H_n$ and false in $\mbf F_n$. Finally, we show that $\mrm{DM}(\mbf F_n)$ is a retract of $\mrm{Id}(\mbf F_n)$ and that for any lattice $\mbf K$ which satisfies Whitman's condition $\mathrm{(W)}$ and which is generated by join prime elements, the three lattices $\mbf K$, $\mrm{DM}(\mbf K)$, and $\mrm{Id}(\mbf K)$ all share the same positive universal first-order theory.
\end{abstract}

\title{Elementary Properties of Free Lattices}


\thanks{The second author was supported by project PRIN 2022 ``Models, sets and classifications", prot. 2022TECZJA. The second author also wishes to thank the group GNSAGA of the ``Istituto Nazionale di Alta Matematica ``Francesco Severi"" (INDAM) to which he belongs}

\author{J.B. Nation and Gianluca Paolini}

\address{Department of Mathematics, University of Hawaii, Honolulu, HI 96822.}
\email{jb@math.hawaii.edu}

\address{Department of Mathematics ``Giuseppe Peano'', University of Torino, Via Carlo Alberto 10, 10123, Italy.}
\email{gianluca.paolini@unito.it}

\date{\today}
\maketitle


\section{Introduction}
	
	The model theoretic analysis of free objects (in the sense of universal algebra) has a long tradition, dating back to the 1940's with the work of Tarski and his school. Problems in this areas are often pretty hard and they require advanced technology to be solved.  A canonical example of this phenomenon is the solution of Tarski's problem on the elementary equivalence of free groups, which was solved in 2006 independently by Sela~\cite{Sela06} and Kharlampovich \& Myasnikov~\cite{KM06}: 
 all non-abelian finitely generated free groups are elementarily equivalent, regardless of the number of generators.
 As for any naturally arising mathematical structure (or class of structures), many are the model-theoretic questions that can be asked about it.  In the context of free objects we might argue that the focus has been on the following four fundamental problems:
	\begin{enumerate}[(A)]
	\item (positive) elementary equivalence of free objects of different rank;
	\item characterization of the finitely generated models elementarily equivalent to a given free object of finite rank;
	\item decidability of (fragments of) the (positive) first-order theory of a free object;
	\item analysis of the stability (in the sense of Shelah \cite{shelah}) properties of a free object.
\end{enumerate}

\smallskip
	
	The model theoretic literature is full of such results, where, once again, probably the most advanced results are on the model theory of free groups. Another important case worth mentioning is the one of free abelian groups.  In this case it follows easily from \cite{smez} that free abelian groups of different rank are not elementarily equivalent and that free abelian groups of finite rank are superstable. Another important piece of literature is on the model theory of free algebras in the context of infinitary logic, cf.~in particular the fundamental work of Eklof, Mekler, and Shelah \cite{mekler1, mekler2}.
 
 In universal algebra, among the most natural classes of algebraic structures that occur in nature there are certainly {\em lattices}, and so, as for any variety of algebras, there are {\em free lattices}. In the last 30 years or so, the algebraic study of free lattices has reached a very mature state, as witnessed by the canonical reference \cite{the_book} on this topic (the ``blue book''). Despite the widespread interest of model theorists in free objects and despite the advanced development of the theory of free lattices, at the best of our knowledge, very little is known on the model theory of free lattices. We consider this a sad state of affairs and we think of \mbox{this paper as a starting point for a remedy, hoping that it will sparkle interest.}
	
\medskip
	
	To start our model theoretic analysis we test what is the situation against Problems (A)-(D) above in the context of free lattices. With regard to (D), as it is easy to see, free lattices fail the stability property (once again, in the sense of Shelah \cite{shelah}) very badly, and so, although more refined questions can still be meaningfully asked, this might not be the right starting point. Concerning (A), it is easy to see that free lattices of different finite rank can be distinguished by a $\exists\forall$-sentence of first-order logic, because the generating set is unique. This leaves us then with the following three questions, where we denote by $\mathbf{F}_n$ the free lattice of rank $n$.
\begin{enumerate}[(A)]
	\item Are $\mathbf{F}_n$ and $\mathbf{F}_m$ elementarily equivalent in {\em positive} first-order logic?
	\item Which are the finitely generated lattices elementarily equivalent to $\mathbf{F}_n$?
	\item Is the first-order theory of\/ $\mathbf{F}_n$ decidable?
\end{enumerate}

	Unfortunately, Question (C) resisted our tries, but we hope that this paper will spark some interest in this fundamental question. Notice that in his celebrated paper \cite{whitman}, Whitman solved the word problem for free lattices, exhibiting a natural algorithmic procedure to decide whether two lattice terms are equivalent (modulo the theory of lattices)\footnote{As an historical note: it turns out that Skolem in a famous 1920 paper \cite{ThS} solved the word problem not only for free lattices but for any finitely presented lattice; see \cite{FN2016} for Skolem's solution. However, this section of Skolem's paper remained unknown until it was found by Stanley Burris late in the 20th century.}. In logical terms this means that the positive universal theory of a free lattice is decidable.
\mbox{Thus, as a starting point toward Question (C), we ask:}
	
		\begin{problem}
	 Let $3 \leq n \leq \omega$. Is the $\forall$-theory of\/ $\mbf F_n$ decidable?
\end{problem}


	We then move to Questions (A) and (B), in this respect the situation is more favorable. In particular, concerning (A) we were able to show the following:

	\begin{theorem}\label{positive_theorem} The free lattices $\mbf F_n$ (for $3 \leq n < \omega$) are not positively indistinguishable. In fact there is a $\exists \forall$-positive sentence true in $\mbf F_3$ and false in $\mbf F_4$.
\end{theorem}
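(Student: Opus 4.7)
The plan is to construct an explicit positive $\exists\forall$-sentence $\Phi = \exists a_1 a_2 a_3 \, \forall y \, \varphi(\bar a, y)$, witnessed in $\mbf F_3$ by the three generators $x_1, x_2, x_3$. Two structural facts underpin the construction. First, in $\mbf F_n$ every element is a lattice term in the $n$ generators, so the top is $T_n = x_1 \vee \cdots \vee x_n$ and the bottom is $B_n = x_1 \wedge \cdots \wedge x_n$. Second, by Whitman's theorem each generator is both join-prime and meet-prime, which in $\mbf F_3$ yields the dichotomy that every $y \in \mbf F_3$ is either equal to $T_3$ or satisfies $y \leq x_j \vee x_k$ for some pair $\{j,k\} \subset \{1,2,3\}$---proved by induction on Whitman's canonical form, since if $x_i \not\leq y$ then by join-primality $y \vee \bigvee_{j \neq i} x_j \neq T_3$, which forces $y \leq \bigvee_{j \neq i} x_j$. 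A dual dichotomy holds on the meet side.

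The proposed $\varphi(\bar a, y)$ encodes these dichotomies positively. Its principal clauses are of the shape
\[
y \wedge (a_j \vee a_k) = y \quad \text{and} \quad (a_j \wedge a_k) \vee y = y
\]
for each pair $\{j, k\}$ (expressing $y \in [a_j \wedge a_k, a_j \vee a_k]$), together with two extremal clauses $T_a \wedge y = T_a$ and $B_a \vee y = B_a$ (expressing $y = T_a$ and $y = B_a$, where $T_a := a_1 \vee a_2 \vee a_3$ and $B_a := a_1 \wedge a_2 \wedge a_3$). In $\mbf F_3$ with $\bar a = (x_1, x_2, x_3)$, the dichotomy and its dual combine to show that every $y$ falls into one of these clauses; a short case analysis based on join-primality handles the subcase in which the "upper" and "lower" pairs produced by the two dichotomies differ, since $y \leq x_j \vee x_k$ together with $x_i \not\leq y$ (for the third index $i$) forces some generator to be $\leq y$, which in turn forces an appropriate meet to be $\leq y$.

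For the $\mbf F_4$ direction I would argue by contradiction. Given a candidate $\bar a \in \mbf F_4$, the extremal clauses already force $T_a = T_4$ and $B_a = B_4$; then by a pigeonhole argument over the four generators $x_1, x_2, x_3, x_4$ of $\mbf F_4$ and the three pair intervals, at most three of the four can lie in pair intervals of the "non-degenerate" shape (with incomparable $a_j, a_k$ and meet-/join-primality giving $a_j$ or $a_k$ equal to the generator in question). So either (i) some generator lies outside every pair interval, directly giving a counterexample, or (ii) some pair is of the "degenerate" shape $\{B_4, T_4\}$ whose interval is $\mbf F_4$ (e.g.\ $\bar a = (B_4, T_4, a_3)$). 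To rule out (ii), the formula $\varphi$ must be strengthened with further positive clauses which fail whenever some $a_i$ coincides with $T_a$ or $B_a$ but hold for the genuine generators of $\mbf F_3$; natural candidates are Whitman-style identities relating the $a_i$ to auxiliary meets and joins built from $\bar a$ itself, or positive equations forcing each $a_i$ to be doubly irreducible in a suitably encoded sense.

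The main obstacle is precisely the handling of such degenerate triples, since one cannot assert "$a_i \neq T_a$" positively. The needed strengthenings must indirectly encode that each $a_i$ is doubly irreducible in a way that is satisfied by the generators of $\mbf F_3$ but fails in $\mbf F_4$ for every choice of three elements, and calibrating them so as not to break the $\mbf F_3$ verification is delicate; the construction leans on Whitman's condition $(W)$, the solvability of the word problem in $\mbf F_n$, and a finite case analysis of possible candidate triples in $\mbf F_4$, where explicit witnesses such as $y = (x_1 \vee x_3) \wedge (x_2 \vee x_4)$ typically serve to refute the remaining non-degenerate triples.
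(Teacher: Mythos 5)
Your proposal has a genuine gap, and it is exactly the point you flag yourself as ``the main obstacle.'' The base sentence you construct --- $\exists a_1 a_2 a_3\,\forall y$ ($y$ lies in some $[a_j\meet a_k,\,a_j\join a_k]$, or $y=T_a$, or $y=B_a$) --- is indeed true in $\mbf F_3$ with $\bar a$ the generators, but it is \emph{also} true in $\mbf F_4$: take $a_1=x_1\meet\cdots\meet x_4$, $a_2=x_1\join\cdots\join x_4$, $a_3$ arbitrary, so that $[a_1\meet a_2,\,a_1\join a_2]$ is all of $\mbf F_4$. Everything therefore rests on the unspecified ``strengthening'' that would positively exclude such degenerate triples, and you do not produce it. This is not a routine technicality: conditions like ``$a_i\neq T_a$'' or ``$a_i$ is doubly prime'' are $\Pi_1$ conditions built from negations of inequalities, and there is no evident positive surrogate that holds of the generators of $\mbf F_3$ yet fails for \emph{every} triple in $\mbf F_4$. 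Until that clause is exhibited and both verifications are redone with it, the argument does not establish the theorem.

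The paper resolves this difficulty with a structurally different sentence, and the difference is instructive. Its universal block ranges over \emph{quadruples} $x_1,\dots,x_4$ and contains the positive escape disjunct $\op{NI}(x_1,\dots,x_4)$, which (in the presence of $\mathrm{(W)}$) says precisely that the quadruple does not generate a copy of $\mbf F_4$; the confinement clauses then only need to constrain quadruples that \emph{do} generate $\mbf F_4$. The intervals used are not your pair intervals $[a_ja_k,\,a_j{+}a_k]$ but the congruence classes $I^{z_i}$, $J_{z_j}$, $K$ of the kernel of the bounded homomorphism of $\mbf F_3$ onto the lattice obtained by doubling six elements of $\op{FD}_3$; and the refutation in $\mbf F_4$ applies the sentence to the four free generators (for which $\op{NI}$ fails) and derives a contradiction from meet-semidistributivity and the meet-primeness of generators. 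Your single-variable universal quantifier forces you to confine \emph{every} element of the lattice, which is what creates the degenerate-witness problem; the paper's four-variable design with the $\op{NI}$ clause is what makes a positive sentence possible at all. As written, your proof is a plausible first attempt whose essential step is missing.
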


	Interestingly, our proof does not extend to $n \geq 4$. We naturally wonder if this is a limitation of our methods or if there is an intrinsic reason for this. Also, we want to mention that Theorem~\ref{positive_theorem} was motivated by the analysis of the positive first-order theory of free semigroups and free monoids from references \cite{positive_1, positive_2}.
	
	\medskip
	
	Finally, we move to Question (B). This is the venue that inspired the most interesting results of this paper, with applications also to infinitely generated models of $\mrm{Th}(\mbf F_n)$. The crucial result in this direction is the following ``Profinite Theorem''.

	\begin{theorem}\label{profinite_theorem} Let $3 \leq n < \omega$ and $\mbf K \equiv \mbf F_n$.
 Then $\mbf K$ admits a canonical homomorphism $h_{\mathbf{K}}$ into the profinite-bounded  completion $\mbf H_n$ of\/ $\mbf F_n$ (cf. Fact~\ref{bounded_fact}).
Furthermore, $\mbf H_n$ is isomorphic to the Dedekind-MacNeille completion of\/ $\mbf F_n$ and $\mbf H_n \not \equiv \mbf F_n$, in fact there is a positive $\forall\exists$-sentence true in $\mbf H_n$ and false in $\mbf F_n$.
\end{theorem}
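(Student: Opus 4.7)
\smallskip\noindent\emph{Plan.} I would address the three assertions in order, leveraging throughout the first-order definability of the free generators of $\mbf F_n$.

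For the canonical map $h_{\mbf K}$, the key point is that the $n$ generators of $\mbf F_n$ are isolated by a first-order formula $\phi(x)$ (for instance, by asserting that $x$ is both join-prime and meet-prime, which in $\mbf F_n$ picks out exactly the generators). Since ``$\phi$ has exactly $n$ solutions'' belongs to $\mrm{Th}(\mbf F_n)$, any $\mbf K \equiv \mbf F_n$ contains exactly $n$ elements satisfying $\phi$, say $\{k_1,\dots,k_n\}$. For each bounded epimorphism $f : \mbf F_n \to \mbf L$ onto a finite lattice, every fibre $f^{-1}(\ell)$ is an interval $[t^{-}_{\ell}(\bar g), t^{+}_{\ell}(\bar g)]$ for specific lattice terms $t^{\pm}_{\ell}$ determined by $\mbf L$ and $(f(g_1),\dots,f(g_n))$. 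I would define $h_{f,\mbf K}(a) = \ell$ iff $t^{-}_{\ell}(\bar k) \leq a \leq t^{+}_{\ell}(\bar k)$ in $\mbf K$; totality, well-definedness, and preservation of $\vee$ and $\wedge$ all reduce to finitely many first-order sentences that are true in $\mbf F_n$ and therefore in $\mbf K$. Compatibility of the $h_{f,\mbf K}$ as $f$ ranges over the inverse system defining $\mbf H_n$ then produces $h_{\mbf K} : \mbf K \to \mbf H_n$.

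For $\mbf H_n \cong \mrm{DM}(\mbf F_n)$ I would verify the characterising universal property of the Dedekind--MacNeille completion, namely that $\mbf F_n$ embeds in $\mbf H_n$ both join- and meet-densely. Join-density is checked by showing that any $\eta \in \mbf H_n$, viewed as a coherent family of images in the finite bounded quotients, agrees in every coordinate with $\bigvee\{a \in \mbf F_n : a \leq \eta\}$; meet-density is dual. The universal property then produces a canonical complete-lattice isomorphism with $\mrm{DM}(\mbf F_n)$, whose inverse sends each Galois-closed cut of $\mbf F_n$ to its family of images across the inverse system.

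For $\mbf H_n \not\equiv \mbf F_n$ the plan is to exhibit structural features of the complete lattice $\mbf H_n$ absent in the ``continuous'' unbounded $\mbf F_n$. Because each generator $g_i$ is completely join-prime in $\mbf H_n$, the element $\ol{g_i} := \bigvee\{x \in \mbf F_n : x < g_i\}$, computed in $\mbf H_n$, lies strictly below $g_i$ and is in fact its lower cover, whereas $\mbf F_n$ admits no covering pairs at all. Similarly $\mbf H_n$ carries top and bottom elements that $\mbf F_n$ lacks. I would then look for a positive $\forall\exists$-sentence of the form $\forall x\,\exists \vec y\,\psi(x, \vec y)$, with $\psi$ a positive equational combination, witnessed in $\mbf H_n$ by such cover-companions or bounds and verifiably false in $\mbf F_n$ at some distinguished element (for instance at a generator, using Whitman's solution of the word problem to rule out witnesses). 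The main obstacle is calibrating this sentence so that its syntactic shape is genuinely positive $\forall\exists$ --- the bare existence of a bottom or a cover is only $\exists\forall$ --- and so that the $\mbf F_n$-failure is explicitly verifiable; I expect the right pattern to assert, uniformly over all $x$, the existence of companions realising the role that $\ol{g_i}$ plays relative to $g_i$ in the completion.
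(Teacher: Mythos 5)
Your treatment of the first two assertions is sound. The construction of $h_{\mbf K}$ via the interval decomposition of the fibres of bounded epimorphisms onto finite lattices, transferred to $\mbf K$ by first-order means (using the definability of the generators as the doubly prime elements), is essentially the argument of Lemma~\ref{homomor_lemma}. Your route to $\mbf H_n \cong \mrm{DM}(\mbf F_n)$ via the join- and meet-density characterisation of the Dedekind--MacNeille completion is a legitimate alternative to the paper's explicit order-isomorphism $\mbf c = (\mbf b, \mbf a) \mapsto A^\ell$ onto the $\mrm{DM}$-closed ideals, which instead rests on Day's theorem that the completely join irreducible elements are join-dense. Your version trades that lattice-theoretic input for the verification that the thread $(\beta_k(\mbf c))_k$ has join $\mbf c$ in the inverse limit (and dually), which is a routine computation; either way works, and yours is arguably more self-contained for this particular clause.

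The third assertion is where you have a genuine gap, and you flag it yourself. None of the features you propose to exploit can be packaged as a positive $\forall\exists$-sentence: the existence of a lower cover of $x$ requires a negated atomic formula ($y < x$) and a universally quantified implication, so it is not positive at all, while the existence of top and bottom elements is positive but of shape $\exists\forall$, which only gives $\mbf H_n \not\equiv \mbf F_n$ and not the stated quantifier form. The missing idea is the Knaster--Tarski fixed-point theorem: for any lattice polynomial $p(x,\mbf b)$, the sentence $\forall \mbf b\, \exists x\, \bigl(p(x,\mbf b) = x\bigr)$ is positive $\forall\exists$ and holds in every complete lattice, hence in $\mbf H_n$ (take $x = \bigvee\{a : a \leq p(a,\mbf b)\}$); on the other hand $\mbf F_n$ for $n \geq 3$ admits fixed-point free polynomials (Section~I.5 of the Freese--Je\v{z}ek--Nation book), so the same sentence fails there for an appropriate choice of $p$. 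This is exactly Lemma~\ref{lemma:claim1} and Theorem~\ref{62no}. Without this, or some substitute producing an honestly positive $\forall\exists$ separator, your argument for the last clause does not close.
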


    Theorem~\ref{profinite_theorem} led us to investigations related to an old question of Gr\"atzer, that is, which first-order conditions are preserved in passing from $\mbf K$ to $\mrm{Id}(\mbf K)$? An old result of Baker \& Hales \cite{baker} says that $\mbf K$ and $\mrm{Id}(\mbf K)$ share the same positive universal theory.  On the other hand, as observed by Funayama in 1944 \cite{funa}, there are distributive lattices $\mbf K$ such that $\mrm{DM}(\mbf K)$ is not distributive, and so in the case of $\mrm{DM}(\mbf K)$ this preservation of the positive universal theory of $\mbf K$ is not at all to be taken for granted. In this direction, in our next and final theorem we isolate two properties (satisfied by free lattices) of an arbitrary lattice $\mbf K$ which ensure that the three lattices $\mbf K$, $\mrm{DM}(\mbf K)$, and $\mrm{Id}(\mbf K)$ all share the same positive universal theory.

Whitman's solution to the word problem for free lattices uses the following condition, which holds in free lattices:
\[ \mathrm{(W)} \quad s \meet t \leq u \join v \text{ implies } s \leq u \join v \text{ or } t \leq u \join v \text{ or }s \meet t \leq u \text{ or } s \meet t \leq v .    \]

\begin{theorem}\label{the_last_theorem} Let $\mbf K$ be a lattice satisfying Whitman's condition $\mathrm{(W)}$ and which is generated by join prime elements.  Then the three lattices $\mbf K$, $\mrm{DM}(\mbf K)$, and $\mrm{Id}(\mbf K)$ all share the same positive universal theory. Furthermore, in the case $\mbf K = \mbf F_n$,
then $\mrm{DM}(\mbf K)$ is a retract of\/ $\mrm{Id}(\mbf K)$ and so, in particular, the first-order positive theory of\/ $\mrm{Id}(\mbf F_n)$ is contained in the first-order positive theory of\/ $\mrm{DM}(\mbf F_n)$.
\end{theorem}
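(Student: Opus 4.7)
The plan is to isolate the three sub-assertions and handle them in turn. For the three-way coincidence of positive universal theories, the sublattice embeddings $\mathbf{K} \hookrightarrow \mathrm{DM}(\mathbf{K})$ (via principal DM-cuts) and $\mathbf{K} \hookrightarrow \mathrm{Id}(\mathbf{K})$ (via principal ideals) immediately yield that any positive universal sentence true in a completion is true in $\mathbf{K}$. The Baker--Hales result \cite{baker} supplies the reverse inclusion for $\mathrm{Id}(\mathbf{K})$, so the only new content is to transfer positive universal sentences from $\mathbf{K}$ to $\mathrm{DM}(\mathbf{K})$.

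After a standard normalization I may take such a sentence to be $\sigma = \forall \bar x \, \bigvee_{i=1}^{k}(t_i(\bar x) \leq s_i(\bar x))$. Assuming for contradiction that $\sigma$ holds in $\mathbf{K}$ but fails in $\mathrm{DM}(\mathbf{K})$ at some $\bar a$, the hypothesis that $\mathbf{K}$ is generated by join-primes (together with join-density of $\mathbf{K}$ in $\mathrm{DM}(\mathbf{K})$) yields join-primes $p_i \in \mathbf{K}$ with $p_i \leq t_i(\bar a)$ and $p_i \not\leq s_i(\bar a)$ for each $i$. I then perform a Whitman-style induction on the term structure of $t_i$ and $s_i$: whenever a meet-join inequality of the form $u \meet v \leq w \join z$ is encountered, condition $\mathrm{(W)}$ decides it in one of four structurally simpler cases, and the join-primeness of $p_i$ eliminates the disjunctive uncertainty in replacing a coordinate $a_j$ by a lower approximation $b_j^{(i)} \in \mathbf{K}$; dually, $p_i \not\leq s_i(\bar a)$ forces upper approximations in $\mathbf{K}$. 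The main combinatorial step is to coordinate these choices across $i = 1, \ldots, k$, via meets of lower witnesses and joins of upper witnesses, to produce a single tuple $\bar b \in \mathbf{K}^m$ that simultaneously falsifies every disjunct of $\sigma$, contradicting the hypothesis.

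For the retraction when $\mathbf{K} = \mathbf{F}_n$, I define a section $s \colon \mathrm{DM}(\mathbf{F}_n) \to \mathrm{Id}(\mathbf{F}_n)$ by $s(a) = \{x \in \mathbf{F}_n : x \leq a\}$ and a retraction $r \colon \mathrm{Id}(\mathbf{F}_n) \to \mathrm{DM}(\mathbf{F}_n)$ by $r(I) = (I^u)^{\ell}$, the DM-closure of $I$. The identity $r \circ s = \mathrm{id}_{\mathrm{DM}(\mathbf{F}_n)}$ is automatic because each $a$ is the DM-supremum of $s(a)$. The substantive check is that both $s$ and $r$ are lattice homomorphisms: meets are straightforward in both cases, and join-preservation reduces, via a Whitman-cover argument in $\mathbf{F}_n$, to certifying the join-prime generators as elements of the DM-closure of the constructed join. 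Once the retraction is in place, the final assertion follows from Lyndon's preservation theorem: the surjective homomorphism $r$ transports positive first-order sentences true in $\mathrm{Id}(\mathbf{F}_n)$ to positive sentences true in $\mathrm{DM}(\mathbf{F}_n)$.

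The principal obstacle I anticipate lies in Part 1, specifically in extracting a single coherent tuple $\bar b \in \mathbf{K}^m$ that falsifies all $k$ disjuncts of $\sigma$ at once, rather than merely obtaining a separate witness per disjunct. Condition $\mathrm{(W)}$ is the structural principle that keeps the inductive term-analysis confined to $\mathbf{K}$, and its absence is exactly what permits Funayama's distributive lattice to acquire a non-distributive DM completion. In Part 2 the analogous delicate point is verifying that the DM-closure operator preserves finite joins of ideals of $\mathbf{F}_n$, which again rests on $\mathrm{(W)}$ together with the join-prime generators.
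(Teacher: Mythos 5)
Your Part 2 is essentially the paper's argument: the retraction is $\kappa(I)=I^{u\ell}$ and the section is just the inclusion of the DM-closed ideals into $\mathrm{Id}(\mathbf{F}_n)$. One caveat: the meet case is not ``straightforward'' --- the computation $(I\cap J)^{u\ell}=(I^{u}\vee J^{u})^{\ell}=I^{u\ell}\cap J^{u\ell}$ needs the order-dual of the filter identities below, which requires the generators to be \emph{meet} prime as well as join prime; this is exactly why the retract claim is made only for $\mathbf{F}_n$ (doubly prime generators) and not for a general $\mathbf{K}$ satisfying the hypotheses.

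The genuine gap is in Part 1, and you have located it yourself: the ``coordination across $i$'' step of your Whitman-style term induction is the part that does not go through as sketched, and there are problems even upstream of it --- an element that is join prime in $\mathbf{K}$ need not remain join prime in $\mathrm{DM}(\mathbf{K})$ (primeness with respect to finite joins does not automatically survive completion), and a lattice generated by join primes need not have its join primes join-dense, so the witnesses $p_i\leq t_i(\bar a)$ with $p_i\nleq s_i(\bar a)$ need not exist. The missing idea that makes all of this unnecessary is the key lemma of the paper's proof: if $\mathbf{K}$ satisfies $\mathrm{(W)}$ and is generated by join prime elements, then for all filters $F,G$ of $\mathbf{K}$ one has $F^{\ell}\vee G^{\ell}=(F\cap G)^{\ell}$ and $F^{\ell}\cap G^{\ell}=(F\vee G)^{\ell}$, and consequently $\mathrm{DM}(\mathbf{K})$ is a \emph{sublattice} of $\mathrm{Id}(\mathbf{K})$. (Its proof is a closure argument: the set of $w$ for which $w\notin F^{\ell}\vee G^{\ell}$ implies $w\notin(F\cap G)^{\ell}$ contains the join-prime generators, is closed under joins for trivial reasons, and is closed under meets by an application of $\mathrm{(W)}$.) Once you have this, Part 1 is a two-line sandwich: $\mathbf{K}\leq\mathrm{DM}(\mathbf{K})\leq\mathrm{Id}(\mathbf{K})$ as sublattices, positive universal sentences are inherited by sublattices, and Baker--Hales gives that $\mathrm{Id}(\mathbf{K})$ and $\mathbf{K}$ have the same positive universal theory, forcing all three to coincide. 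No induction on the terms of $\sigma$ is needed, and the same lemma together with its dual is precisely what is required to make your Part 2 homomorphism checks rigorous.
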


    Motivated by Theorem~\ref{the_last_theorem}, in Corollary~\ref{cor:props} we show that $\mrm{DM}(\mbf F_n)$ and $\mrm{Id}(\mbf F_n)$ are {\em not} elementarily equivalent. But we leave open the following question.

    \begin{question} Are $\mrm{DM}(\mbf F_n)$ and $\mrm{Id}(\mbf F_n)$ positively elementarily equivalent?
\end{question}

	What we find particularly interesting about Theorem~\ref{profinite_theorem} is that this theorem is reminiscent of the model theory of free abelian groups. In fact in that case the same thing happens, with the crucial difference, though, that $\mathbb{Z}^n$ {\em is} elementarily equivalent to its profinite completion. We notice that the lattice $\mbf H_n$ plays a crucial role also in the lattice theoretic literature, in particular in connection with Day's Theorem, see \cite[Section~2.7]{the_book}. The realization that $\mbf H_n$ is isomorphic to the Dedekind-MacNeille completion of $\mbf F_n$ was the crucial ingredient in showing that $\mbf H_n$ and $\mbf F_n$ are not elementarily equivalent, and in fact this model theoretic question inspired this result, but we believe that this fact is of independent interest and could be further explored by lattice theorists. Furthermore, despite the hopelessness of a classification of the models of $\mrm{Th}(\mbf F_n)$ (recall the instability mentioned above), Theorem~\ref{profinite_theorem} reduces the understanding of models of $\mrm{Th}(\mbf F_n)$ to understanding $\mbf H_n$ and to understanding the equivalence classes induced by $\mrm{ker}({h_{\mathbf{K}}})$.  In fact the source of instability present in $\mbf F_n$ reduces to the fact that such equivalence classes can be in general very complicated. On the other hand, under further assumptions on models of $\mrm{Th}(\mbf F_n)$ there is hope to prove some positive results. For example, in light of Theorem~\ref{profinite_theorem} we might say that a lattice $\mbf K \models \mrm{Th}(\mbf F_n)$ is {\em standard} if the map $h_{\mathbf{K}}$ has range in $\mbf F_n$, that is each element of $\mbf K$ is congruent modulo $\mrm{ker}({h_{\mathbf{K}}})$ to an element of $\mbf F_n$ (notice that the equivalence relation $\mrm{ker}({h_{\mathbf{K}}})$ can be described explicitly, cf. Section~4). In this direction we propose the following conjecture:


	\begin{conjecture}\label{finite_gen_models} There is no finitely generated standard \mbox{elementary extension of\/ $\mbf F_n$.}
\end{conjecture}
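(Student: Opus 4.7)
The plan is to argue by contradiction. Suppose $\mbf K \succ \mbf F_n$ is a strict, finitely generated, standard elementary extension, say $\mbf K = \la S \ra$ with $S$ finite.

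I would first constrain the shape of $S$. The free generators $x_1, \dots, x_n$ of $\mbf F_n$ are simultaneously join-prime and meet-prime there, and each of these two properties is expressible by a single first-order sentence about a parameter; so by elementarity each $x_i$ remains join-prime and meet-prime in $\mbf K$. Given a lattice term $t(s_{j_1}, \dots, s_{j_r})$ in elements of $S$ equal to $x_i$ in $\mbf K$, an induction on $t$ forces $x_i \in S$: at a join node $t = u \join v$, join-primality of $x_i$ yields $x_i \leq u$ or $x_i \leq v$, while the reverse inequality is automatic, so $x_i$ equals one of the two summands; dually at meet nodes. The induction bottoms out at a variable, placing $x_i$ in $S$. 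After pruning redundancies we may therefore write $\mbf K = \la x_1, \dots, x_n, s_1, \dots, s_l \ra$ with $l \geq 1$ (if $l = 0$ then $\mbf K = \mbf F_n$, contradicting properness).

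Next I would exploit standardness. The canonical map $h = h_{\mbf K}: \mbf K \to \mbf F_n$ is the identity on $\mbf F_n \subseteq \mbf K$, and each $s_j$ is $\mrm{ker}(h)$-equivalent to $b_j := h(s_j) \in \mbf F_n$; write $b_j = q_j(x_1, \dots, x_n)$ for some lattice term $q_j$. Since $s_j \notin \mbf F_n$ we have $s_j \neq b_j$, so $\mrm{ker}(h)$ is a non-trivial congruence on $\mbf K$. Consider the covering $\pi: \mbf F_{n+l} \twoheadrightarrow \mbf K$ sending the $n+l$ free generators to $x_1, \dots, x_n, s_1, \dots, s_l$. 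Composing with $h$ yields the surjection $\sigma = h \circ \pi: \mbf F_{n+l} \twoheadrightarrow \mbf F_n$, whose kernel is the congruence on $\mbf F_{n+l}$ generated by the $l$ relations identifying the $(n+j)$-th free generator with the corresponding $q_j$ of the first $n$. Hence $\mrm{ker}(\pi)$ is a proper sub-congruence of $\mrm{ker}(\sigma)$, and $\mbf K$ sits strictly between $\mbf F_{n+l}$ and $\mbf F_n$ as a quotient.

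The heart of the argument, and the main obstacle, is then to show that no such intermediate quotient can be elementarily equivalent to $\mbf F_n$. The plan here is to combine the explicit description of $\mrm{ker}(h_{\mbf K})$ promised in Section~4 with Day's doubling machinery (cf.\ \cite[\S2.7]{the_book}), aiming to extract from any non-trivial $\mrm{ker}(h)$-class a first-order detectable configuration absent in $\mbf F_n$: for instance, a \emph{doubled} cover manifested as two distinct elements lying between the same pair of Whitman-canonical bounds, which the uniqueness of canonical forms in $\mbf F_n$ rules out. Producing such an explicit sentence is delicate lattice theory—the authors themselves advance the statement only as Conjecture~\ref{finite_gen_models}—and it is where the proof is most likely to stand or fall.
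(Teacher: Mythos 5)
The statement you are trying to prove is advanced in the paper only as a \emph{conjecture}; the authors give no proof, so there is no ``paper proof'' to compare against. Your preliminary reductions are correct but routine: since each free generator $x_i$ is doubly prime in $\mbf F_n$ (Fact~\ref{doubly_prime_fact}) and double primeness is first-order, elementarity keeps the $x_i$ doubly prime in $\mbf K$, and your induction on terms correctly forces any finite generating set of $\mbf K$ to contain $x_1,\dots,x_n$; likewise the presentation of $\mbf K$ as a quotient of $\mbf F_{n+l}$ sitting properly between $\mbf F_{n+l}$ and $\mbf F_n$ is fine. But none of this touches the actual difficulty, and your final paragraph openly defers it.

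The gap is precisely the step you label ``the heart of the argument.'' To derive a contradiction you would need a first-order sentence, true in $\mbf F_n$, that is violated by the existence of a nontrivial $\ker(h_{\mbf K})$-class. The obstruction is that $\ker(h_{\mbf K})=\psi_{\mbf K}=\bigcap_{k<\omega}\varphi_k$ (Proposition~\ref{delta_prop}) is an intersection of infinitely many definable congruences and is therefore not itself first-order definable; ``$s_j$ and $b_j$ lie in the same $\psi$-class'' is an infinite conjunction of first-order conditions, one for each $k$, and no single sentence of $\mathrm{Th}(\mbf F_n)$ obviously forbids it. Your proposed witness --- ``two distinct elements lying between the same pair of Whitman-canonical bounds'' --- presupposes that the $\psi$-class of $b_j$ has a least and greatest element expressible in $\mbf F_n$; but by Fact~\ref{four_type_elements} the sequences $\beta_k(b_j)$ and $\alpha_k(b_j)$ need not be eventually constant (only the finitely many totally atomic elements have both stabilize), so such bounds generally do not exist in $\mbf F_n$, and the uniqueness of canonical forms gives you no contradiction. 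In short, the proposal reduces the conjecture to itself; the essential idea is still missing, which is consistent with the statement being left open in the paper.
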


	We actually further believe that $\mbf F_n$ is the {\em only} finitely generated model of its theory, this property is known in the model theoretic community as first-order rigidity, a property which received quite some attention in recent years, see e.g. the result of Avni-Lubotzky-Meiri showing that irreducible non-uniform higher-rank characteristic zero arithmetic lattices (e.g. $\mrm{SL}_n(\mathbb Z)$ for $n\geq 3$) \mbox{are first-order rigid \cite{ALM19}.}
 Note that ``lattices" in topological groups are not the same as ``lattices"
in our sense of ordered sets with meet and join.

\section{Notation}

	We write lattices in boldface latters, so $\mbf L, \mbf K$, etc. Given a lattice $\mbf L$, we write the sup and inf of $\mbf L$ as $\vee$ and $\wedge$, but when convenient we switch to the ``field notation'', so  $+$ and $\cdot$ for sup and inf, respectively. We write tuples of elements (or variables) as $\mathbf{x} = (x_1, ..., x_n)$. Given a cardinal number $\kappa$ we denote by $\mbf F_\kappa$ the free lattice on $\kappa$-many generators. By the language of lattice theory we mean the language $L = \{\vee, \wedge\}$.  In particular we do not require $0$ and $1$ to be in the language (as we also consider $\mbf F_\kappa$ for infinite $\kappa$ and such lattices do not have max or min).

An element $a$ of a lattice $\mbf L$ is \emph{join irreducible} if
it is not a proper join, i.e., there do not exist $b < a$ and $c < a$ such
that $a = b \join c$.  Equivalently, $a$ is join irreducible if it is not the
join of a finite nonempty set of elements strictly below $a$.  In any lattice
satisfying $\mathrm{(W)}$, no element can be a proper join and a proper meet.
Thus in a free lattice every element is either join irreducible or meet 
irreducible; generators are both.  

On the other hand, in any lattice, the least upper bound of 
$\{ b \in L : b < a \}$ is either $a$ or the unique largest element
$a_*$ below $a$.  In the latter case, we say that $a$ is
\emph{completely join irreducible}.  Denote the set of completely
join irreducible elements of $\mbf L$ by $\op{CJI}(\mbf L)$.
A join irreducible element in a free lattice need not be completely join irreducible:
some are, and some are not.  

The terms \emph{meet irreducible} and \emph{completely meet irreducible}
are defined dually, along with the notation $\op{CMI}(\mbf L)$.


\section{The positive theory of free lattices}

Let $\op{PTh}(\mbf L)$ denote the positive first-order theory of $\mbf L$.
Since $\mbf F_n$ is a homomorphic image of $\mbf F_{n+1}$, 
we have $\op{PTh}(\mbf F_n)  \supseteq \op{PTh}(\mbf F_{n+1})$. To distinguish them, we seek a positive sentence $\pi(\mbf{x})$ that holds in $\mbf F_n$ but not in $\mbf F_{n+1}$. The aim of this section is to show that we can do this for $n=3$.  However, for $n \geq 4$ it remains open whether there is a positive sentence 
that holds in $\mbf F_n$ but not in $\mbf F_{n+1}$.

\medskip \noindent
Consider the following positive first-order formulas in the language of lattice theory:
\[  \op{NI}(x_1, \dots, x_m): \quad (\op{OR}_{1 \leq i \leq m}  x_i \leq \sum_{j \ne i} x_j) \quad \op{OR} \quad 
             ( \op{OR}_{1 \leq i \leq m}  x_i \geq \prod_{j \ne i} x_j);   \]
\[  t(u): \quad \forall w \ w \leq u; \]
\[  b(u): \quad \forall w \ w \geq u. \]
The last one is more complicated.    For a finite set $X$ and bounded intervals $I_1, \dots, I_n$, 
write $\op{CI}(X,I_1, \dots, I_n)$ to mean $X \subseteq \bigcup_{1 \leq j \leq n} I_j$.  The inclusion can be written as
$\&_{x \in X} \op{OR}_{1 \le j \le n} (x \in I_j )$.  
This is a positive first-order condition as we require each $I_j$ to be bounded, so e.g. $x \in I = [c,d]$ gets written out as $c \leq x \ \&\  x \leq d$, and so on.

\smallskip
\noindent Moreover, for a set $\{ x,y,z \}$ we define the following intervals:
\begin{align*}
I^x &= [x+yz, x+(x+y)(x+z)(y+z)] \\  
J_x &= [ x(xy+xz+yz), x(y+z)] \\          
K    &= [xy+xz+yz,(x+y)(x+z)(y+z)].
\end{align*}

\begin{theorem} The following sentence $\pi_3$ holds in $\mbf F_3$ but not in $\mbf F_4$:
\begin{align*}                 
& \exists z_1 \exists z_2 \exists z_3 \ t(z_1+z_2+z_3) \ \&\ b(z_1z_2z_3) \ \& \\
&  \forall x_1 \forall x_2 \forall x_3 \forall x_4 \ [\op{NI}(x_1, x_2, x_3, x_4) \  \op{OR} \\  
& \op{OR}_{i \ne j} \op{CI}(\{ x_1, \dots, x_4 \}, I^{z_i}, J_{z_j},K) \  \op{OR} \\ 
& \op{OR}_{i \le 3} \op{CI}(\{ x_1, \dots, x_4 \}, [z_i,z_i],K) ],
\end{align*}
where $ I^{z_i}, J_{z_j},K$ are with respect to the set $\{ z_1, z_2, z_3 \}$.
\end{theorem}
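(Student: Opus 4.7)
The plan is to verify $\pi_3$ in $\mbf F_3$ by exhibiting explicit witnesses for the outer existentials, and to refute $\pi_3$ in $\mbf F_4$ by contradiction. In both directions the natural candidates are the free generators of the corresponding lattice.

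For the $\mbf F_3$ direction I would take $z_i := a_i$, the three free generators, so that $t(z_1+z_2+z_3)$ and $b(z_1 z_2 z_3)$ hold immediately from $3$-generation. The core claim is then that whenever four elements $x_1,\dots,x_4 \in \mbf F_3$ make $\op{NI}$ fail (i.e.\ form an independent quadruple in the sense that no $x_k$ lies below the join of the others nor above the meet of the others), they fit inside one of the prescribed interval unions $I^{a_i}\cup J_{a_j}\cup K$ for some $i\neq j$, or $\{a_i\}\cup K$ for some $i$. I would analyse this via the canonical-form theory of $\mbf F_3$ developed in \cite{the_book}: every element of $\mbf F_3$ is either in the central cube $K$, equal to a generator $a_i$, in $I^{a_i}\setminus K$ (elements ``just above'' $a_i$ built from $K$), in $J_{a_i}\setminus K$ (dually), or has a shape (such as $a_i+a_j$, $a_i(a_j+a_k)+\dots$, etc.) that is too ``spread out'' to fit in an independent quadruple with three other arbitrary elements. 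A case split on the distribution of the $x_k$'s among these regions, using independence to discard incompatible combinations, should produce the required cover.

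For the $\mbf F_4$ direction I would pick $x_k := a_k$ for $k=1,\dots,4$, the four free generators of $\mbf F_4$, which are jointly independent in $\mbf F_4$, so $\op{NI}$ fails. Suppose for contradiction that the inner universal disjunction holds, witnessed by some $z_1, z_2, z_3 \in \mbf F_4$ with $z_1+z_2+z_3 = a_1+a_2+a_3+a_4$ and $z_1 z_2 z_3 = a_1 a_2 a_3 a_4$. By Whitman's condition $\mathrm{(W)}$ in $\mbf F_4$ together with join-irreducibility of each $a_k$, the top equation forces $a_k\leq z_{i_k}$ for some index $i_k$; dually the bottom equation and meet-irreducibility of $a_k$ force $z_{j_k}\leq a_k$ for some $j_k$. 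With four generators and only three indices, pigeonhole yields coincidences among both the $i_k$'s and the $j_k$'s. A case analysis on the configurations $(i_1,\dots,i_4)$ and $(j_1,\dots,j_4)$, combined with an explicit computation (via Whitman) of the endpoints of $I^{z_i}, J_{z_j}, K$ in terms of the $z_i$'s, should show that at least one generator $a_k$ lies outside every attempted cover $I^{z_i}\cup J_{z_j}\cup K$ and every $\{z_i\}\cup K$, contradicting the assumption.

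The main obstacle, I expect, is the $\mbf F_3$ direction: not only does it rely on the detailed structural theory of $\mbf F_3$, but the precise engineering of the intervals $I^{a_i}, J_{a_j}, K$ suggests that the analysis of independent four-tuples must be tight, with several subcases where one must verify that a putative ``exotic'' element already forces $\op{NI}$. The $\mbf F_4$ direction feels more mechanical --- essentially Whitman plus pigeonhole --- but still demands a careful enumeration of how the four generators can sit between the three $z_i$'s and why, in every such configuration, at least one generator escapes the cover.
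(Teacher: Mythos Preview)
Your high-level strategy matches the paper's: witness the existentials by generators in both lattices. But both directions are missing the specific tools that make the argument go through.

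For $\mbf F_3$, the paper does not rely on an ad hoc ``canonical-form'' case split. Instead it observes that doubling six elements of the free distributive lattice $\op{FD}_3$ yields a finite lower-and-upper bounded lattice $\mbf A$, so the natural map $h:\mbf F_3\to\mbf A$ has a kernel whose classes are bounded intervals. Computing those intervals via the J\'onsson--McKenzie algorithm gives an \emph{exhaustive} partition of $\mbf F_3$ into finitely many intervals: the singleton classes $T^u$, $B_v$ at the top and bottom, the generator singletons $\{x\}$, the intervals $I^x$, $J_x$, and the middle $K$. Your list omits the $T^u$ and $B_v$ classes and, more importantly, you have no mechanism for proving the list is complete; the bounded homomorphism is exactly what supplies this. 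Once the partition is in hand, the paper's exclusion of bad pairs is concrete: e.g.\ $u\in I^x$ and $v\in I^y$ force $u+v=x+y$, a coatom, which is impossible for $x_i+x_j$ in a copy of $\mbf F_4$.

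For $\mbf F_4$, your pigeonhole idea (each $a_k$ lies below some $z_{i_k}$ and above some $z_{j_k}$) is close to the paper's Case~2, where one $x_i$ equals some $z_j$. But Case~1, where all four generators land in $I^{z_1}\cup J_{z_2}\cup K$, is handled quite differently: the paper uses meet-semidistributivity $(\op{SD}_\meet)$ to show that the bottom of $J_{z_2}$ being $0$ forces $z_2(z_1+z_3)=0$, collapsing $J_{z_2}$ to a point, and dually for $I^{z_1}$; then $(\op{SD}_\meet)$ again collapses $K$. Without invoking $(\op{SD}_\meet)$ your enumeration of $(i_1,\dots,i_4)$ and $(j_1,\dots,j_4)$ does not obviously control the \emph{endpoints} of $I^{z_i}$, $J_{z_j}$, $K$, since these are lattice polynomials in the unknown $z_i$'s rather than in the generators.
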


\begin{proof}
First we show that the sentence holds in $\mbf F_3$.  
Let $\mbf F_3$ be generated by $x,y,z$ and take $z_1=x$, $z_2=y$, $z_3=z$.
Let $x_1, \dots, x_4$ be elements of $\mbf F_3$.
In the presence of $\mathrm{(W)}$,  $\op{NI}(x_1, x_2, x_3, x_4)$ says exactly that those elements
do not generate a copy of $\mbf F_4$ by \cite[Corollary~1.12]{the_book}.
On the other hand,
Whitman showed that $\mbf F_3$ contains $\mbf F_\omega$ \cite{whitman}, see
\cite[Theorems~1.28 and~9.10]{the_book}.
In particular, $\mbf F_3$ contains many copies of $\mbf F_4$, and the sentence $\pi_3$ restricts their location.

The free distributive lattice $\op{FD}_3$ is a bounded lattice.
Also, Alan Day's doubling construction preserves the property of being bounded \cite{RAD77, RAD79},\cite [Sec.~II.3]{the_book}.
By doubling the elements in $\op{FD}_3$ that are the join of two atoms,
or the meet of two coatoms, we obtain the lattice $\mbf A$ in Figure~\ref{fig:doubled}.
Thus the natural homomorphism $h : \mbf F_3 \to \mbf A$ is bounded.
The algorithm for computing lower and upper bounds 
for congruence classes of the kernel 
of a bounded homomorphism, which goes back to J\'onsson \cite{Jonsson} and 
McKenzie \cite{McKenzie}, is given in Theorem 2.3 of \cite{the_book}.
Applying this to the homomorphism $h$ decomposes $\mbf F_3$ into a disjoint union 
the congruence classes of $\ker h$.  These turn out to be
intervals of the following forms (up to permutations of variables):
\begin{align*}
& [u,u]               &(T^u)    \\ 
     &\quad \text{ for } u = (x+y)(x+z), x+y, x+z, x+y+z \\
&[x+yz, x+M]    &(I^x)\\
&[x,x]                 &(G_x) \\
& [xm, x(y+z)]   &(J_x) \\
& [v,v]               &(B_v) \\  
    & \quad \text{ for } v =  xy+xz, xy, xz, xyz\\
&[m,M]                &(K).
\end{align*}
where $m=xy+xz+yz$ and $M=(x+y)(x+z)(y+z)$.
Except for the singleton classes $T^u$ with $u \in \{ x+y, x+z,y+z,x+y+z \}$, and $B_v$ with $v \in \{ xy,xz,yz,xyz \}$,
this decomposition is sketched schematically in Figure~\ref{fig:intervals}.

\begin{figure}
\begin{center}
\begin{tikzpicture}[scale=1]
\tikzstyle{every node}=[draw,circle,fill=white,minimum size=5pt,inner sep=0pt,label distance=1mm] 
   \node (0) at (0,0)[]{};
   \node (1) at (-1,1)[]{};
   \node (2) at (0,1)[]{};
   \node (3) at (1,1)[]{};
   \node (4) at (-1,2)[]{};
   \node (5) at (0,2)[]{};
   \node (6) at (1,2)[]{};
   \node (7) at (-1,3)[]{};
   \node (8) at (0,3)[]{};
   \node (9) at (1,3)[]{};
   \node (10) at (0,4)[]{};
   \node (11) at (-1,1+4)[]{};
   \node (12) at (0,1+4)[]{};
   \node (13) at (1,1+4)[]{};
   \node (14) at (-1,2+4)[]{};
   \node (15) at (0,2+4)[]{};
   \node (16) at (1,2+4)[]{};
   \node (17) at (-1,3+4)[]{};
   \node (18) at (0,3+4)[]{};
   \node (19) at (1,3+4)[]{};
   \node (20) at (0,4+4)[]{};
   \node (21) at (-2,4)[label=left:$x$]{};
   \node (22) at (1,4)[label=right:$z$]{};
   \node (23) at (2,4)[label=right:$y$]{};

   \draw (10)--(9)--(6)--(3)--(0)--(1)--(4)--(7)--(10)--(8)--(5);
   \draw (0)--(2)--(4);
   \draw (2)--(6);
   \draw (1)--(5)--(3);
   \draw (10)--(11)--(14)--(17)--(20)--(19)--(16)--(13)--(10)--(12)--(15);
   \draw (20)--(18)--(16);
   \draw (18)--(14);
   \draw (19)--(15)--(17);
   \draw (7)--(21)--(11);
   \draw (8)--(22)--(12);
   \draw (9)--(23)--(13);
\end{tikzpicture}
\end{center}
\caption
{Lattice $\mbf A$ obtained by doubling six elements in $\op{FD}_3$} \label{fig:doubled} 
\end{figure}
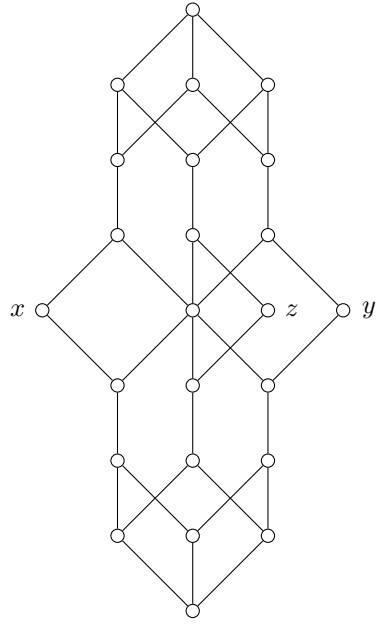


\begin{figure}
\begin{center}
\begin{tikzpicture}[scale=1.0]
   \node at (0,3.5) [draw=white,rectangle,fill=white] {$K$};
   \node at (-1.5,5.5) [draw=white,rectangle,fill=white] {$I^x$};
   \node at (-1.5,1.5) [draw=white,rectangle,fill=white] {$J_x$};
   \node at (1.5,5.5) [draw=white,rectangle,fill=white] {$I^y$};
   \node at (1.5,1.5) [draw=white,rectangle,fill=white] {$J_y$};
\tikzstyle{every node}=[draw,circle,fill=white,minimum size=5pt,inner sep=0pt,label distance=1mm] 
    \node (1) at (-2,0) [label=left:$xy+xz$] {};
    \node (2) at (-1,1) [] {};
    \node (3) at (0,2) [label=below:$m$] {};
    \node (4) at (1,1) [] {};
    \node (5) at (2,0) [label=right:$xy+yz$] {};
    \node (6) at (0,5) [label=above:$M$] {};

    \draw (3) .. controls (-1.5,3) and (-1.5,4) .. (6); 
    \draw (3) .. controls (1.5,3) and (1.5,4) .. (6); 
    
    \node (7) at (-2,7) [label=left:$(x+y)(x+z)$] {};
    \node (8) at (-1,6) [] {};
    \node (9) at (1,6) [] {};
    \node (10) at (2,7) [label=right:$(x+y)(y+z)$] {};
    \node (11) at (-1,4) [] {};
    \node (12) at (-2,5) [label=left:$x+yz$] {};

    \draw (12) .. controls (-1.4,5.1)  .. (8); 
    \draw (12) .. controls  (-1.7,5.9) .. (8); 
    
    \node (13) at (-3,3.5) [label=left:$x$] {};
    \node (14) at (-2,2) [label=left:$x(y+z)$] {};

    \draw (2) .. controls (-1.45,1.9)  .. (14); 
    \draw (2) .. controls  (-1.7,1.1) .. (14); 

   \node (15) at (-1,3) [] {};
   \node (16) at (1,4) [] {};
   \node (17) at (2,5) [] {};

    \draw (17) .. controls (1.35,5.1)  .. (9); 
    \draw (17) .. controls  (1.7,5.9) .. (9); 
    
   \node (18) at (3,3.5) [label=right:$y$] {};
   \node (19) at (2,2) [label=right:$y(x+z)$] {};

    \draw (4) .. controls (1.35,1.9)  .. (19); 
    \draw (4) .. controls  (1.7,1.1) .. (19); 

   \node (20) at (1,3) [] {};

    \draw (1)--(2)--(3)--(4)--(5);
    \draw (7)--(8)--(6)--(9)--(10);
    \draw (11)--(12)--(13)--(14)--(15);
    \draw (16)--(17)--(18)--(19)--(20);
\end{tikzpicture}

\caption
{Schematic of interval decomposition of $\mbf F_3$.}
\label{fig:intervals}
\end{center} 
\end{figure}
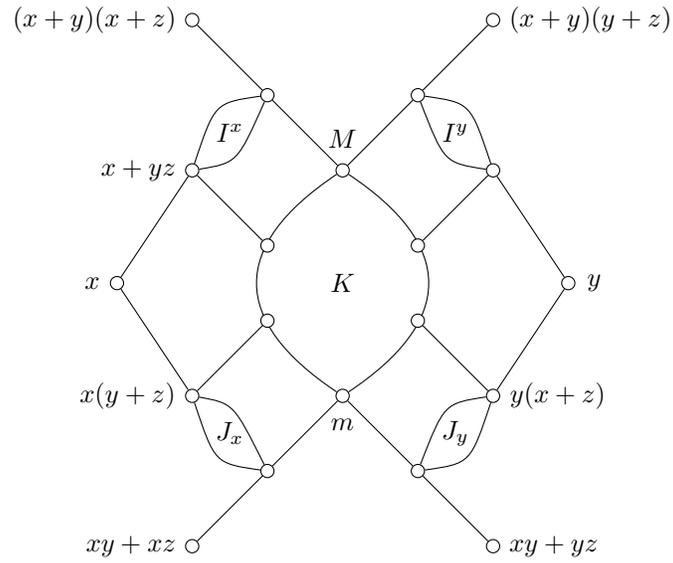

\smallskip
\noindent The claim is that if $x_1, \dots, x_4$ generate a copy of $\mbf F_4$, then
$x_1, \dots, x_4$ is contained in a union of intervals of the form 
$I^x \cup J_y \cup K$ or $\{ x \} \cup K$, up to permutations of variables.
Certainly we cannot have $x_i \in T^u$, that is $x_i=u$, or $x_i \in B_v$ for the elements at the top or bottom of $\mbf F_3$, as each such $\uparrow\! u$ and $\downarrow\! v$ contains only finitely many elements, whereas $\uparrow\! x_i$ and $\downarrow\! x_i$ are infinite.
Moreover, $\{ x_1, \dots, x_4 \}$ cannot contain elements from
both sets of any of the following pairs of intervals:
\begin{align*}
&\{ x \} \ \&\   I^x   \qquad  \{ x \} \ \&\  J_x  \qquad  I^x \ \&\   J_x \\
&\{ y \} \ \&\   I^x   \qquad  \{ y \} \ \&\  J_x  \qquad \{ y \} \ \&\ \{ x \}\\
&I^y \ \&\   I^x   \quad \qquad  J_y \ \&\  J_x 
\end{align*}
The first line is because it would make some pair $x_i$, $x_j$ comparable.
For the second line, note if $t \in I^x$ or $t=x$, then $y+ t = x+y$, and 
similarly for the third line, if $u \in I^x$ and $v \in I^y$, then $u+v=x+y$.
But $x+y$ is a coatom of $\mbf F_3$, and you cannot have $x_i + x_j$ being a coatom, 
no matter where the remaining $x_k$'s lie, since the filter $\uparrow\!(x_i+x_j)$ has at least 4 elements. 
Thus you cannot have the first entries in the 2nd and 3rd lines inhabited,
and dually for the second entries,
and for the same reason $\{ x_i,x_j \} \ne \{ y, x \}$.
That leaves options contained in unions of the form $I^x \cup J_y \cup K$ or $\{ x \} \cup K$, as claimed.

\smallskip
\noindent
Now to show that $\pi_3$ fails in $F_4$. 
The logical form of $\pi_3$ is  the following:
\[  \exists \mbf z \forall \mbf x \ A \ \&\  (B  \text{ or } C) . \]
Its negation would be the following: 
\[  \forall \mbf z \exists \mbf x \   A \rightarrow (\neg B  \ \&\  \neg C) . \]
We will show, switching the quantifiers, that the following holds:
\[  \exists \mbf x \forall \mbf z  \   A \rightarrow (\neg B  \ \&\  \neg C),  \]
which is slightly stronger, since it means that $\mbf x$ is chosen uniformly.

\smallskip
\noindent
We take $x_1, \dots, x_4$ to be the standard generators of $\mbf F_4$, for which $\op{NI}(x_1, \dots, x_4)$ fails,
and intend to show that there do not exist $z_1, z_2, z_3$ with $z_1+z_2+z_3=1$ and $z_1z_2z_3=0$
satisfying - up to symmetry - one of the following two conditions:
$$\op{CI}(\{ x_1, \dots, x_4 \}, I^{z_1}, J_{z_2},K) \;\text{ or }\;
\op{CI}(\{ x_1, \dots, x_4 \}, [z_1,z_1],K).$$

\noindent \underline{Case 1}. $\op{CI}(\{ x_1, \dots, x_4 \}, I^{z_1}, J_{z_2},K)$ holds.
\newline 
Then the least element of $J_{z_2}$, which is $z_2(z_1z_2 + z_1z_3+z_2z_3)$, is $0=x_1x_2x_3x_4$.
\emph{A fortiori} $z_1z_2+z_2z_3=0$, whence $z_1z_2 = 0 = z_2z_3$.
By $(\op{SD}_\meet)$, $z_2(z_1+z_3)=0$.   But that is the top element of $J_{z_2}$,
so there are no $x_i$'s in that interval.  Dually, there are no $x_j$'s in $I^{z_1}$.  
Thus they are all in $K = [z_1z_2 + z_1z_3 + z_2z_3, (z_1+z_2)(z_1+z_3)(z_2+z_3)]$.
But then $z_1z_2 + z_1z_3 + z_2z_3=0$.  Again all 3 joinands are $0$, and applying 
$(\op{SD}_\meet)$ twice (in its more general form, $u=ab=cd$ implies $u=(a+c)(a+d)(b+c)(b+d)$, 
cf.~\cite[Theorem~1.21]{the_book})
we get $(z_1+z_2)(z_1+z_3)(z_2+z_3)=0$.  However, $(z_1+z_2)(z_1+z_3)(z_2+z_3)=1$ since 
it is the top of $K$ and all the $x_j$'s are in $K$. Therefore, that is a contradiction. 

\noindent \underline{Case 2}. $\op{CI}(\{ x_1, \dots, x_4 \}, [z_1,z_1],K)$ holds.
\newline  W.l.o.g. $z_1=x_1$ and $x_2,x_3,x_4 \in K$, 
else we revert to the previous case where everything is in $K$.
If, say, $z_2 \leq z_3$, then the bottom of $K$ is $z_1z_2 + z_1z_3 + z_2z_3 = z_2+z_1z_3$.
Thus $z_2$ and $x_1z_3$ are both below $x_2x_3x_4$, and since each $x_j$ is meet prime in $\mbf F_4$,
we get $z_3 \leq x_2x_3x_4$.  That contradicts $z_1+z_2+z_3= 1 = x_1 + \dots +x_4$.
Hence the $z_j$'s are incomparable and distinct.
On the other hand, from $z_1z_2z_3=0$ we get $\{ x_2,x_3,x_4 \} \gg \{ z_2,z_3 \}$, and dually
from $z_1+z_2+z_3=1$ we get $\{ x_2,x_3,x_4 \} \ll \{ z_2,z_3 \}$.
($A \gg B$ if $\forall a \exists b \ a \geq b$; $C \ll D$ is dual; these are not symmetric.)
Thus each of $x_2$, $x_3$, $x_4$ is above some $z_j$ and below some $z_k$, with $\{ j,k \} \subseteq \{ 2,3 \}$.
Assume say $x_2 \geq z_2$.
By $\{ x_2,x_3,x_4 \} \ll \{ z_2,z_3 \}$ either $x_2 \leq z_2$ or $x_2 \leq z_3$.
The latter gives $z_3 \leq x_2 \leq z_2$, contradicting the argument above.
Thus $x_2 = z_2$.  Similarly $x_3=z_3$.   
That makes $x_4 \geq z_2$ and $x_4 \geq z_3$ both impossible. 

\smallskip
\noindent
Hence, both cases lead to a contradiction, and we are done.
\end{proof}

\section{The profinite-bounded completion of $\mbf F_n$}

	\begin{definition}\label{doubly_prime} Let ${\mbf L}$ be a lattice, we say that $a \in {\mbf L}$ is doubly prime if it is both join prime and meet prime, that is for every $b, c \in {\mbf L}$, the following hold:
	\begin{enumerate}[(1)]
	\item $a \leq b \vee c$ implies $a \leq b$ or $a \leq c$;
	\item $b \wedge c \leq a$ implies $b \leq a$ or $c \leq a$.
	\end{enumerate}
\end{definition}

	\begin{fact}\label{doubly_prime_fact} Let $a$ be an element in a free lattice ${\mbf F}(X)$. The following are equivalent:
	\begin{enumerate}[(1)]
	\item $a \in X$,
	\item $a \in {\mbf F}(X)$ is doubly prime (cf.~Definition~\ref{doubly_prime}). 
\end{enumerate}	
\end{fact}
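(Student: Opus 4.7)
The plan is to handle the two directions separately, both reducing to classical consequences of Whitman's solution to the word problem.

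For (1) $\Rightarrow$ (2), I would cite the fact from the blue book that each generator of a free lattice is both join prime and meet prime. Concretely, given $x \in X$ and $b, c \in \mbf F(X)$, Whitman's algorithm reduces the decision problem $x \leq b \vee c$ (with $b, c$ in canonical form) to the question $x \leq b$ or $x \leq c$ as one of its base cases; the dual statement handles meet primality. This requires no real work beyond unwinding the W-algorithm.

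For (2) $\Rightarrow$ (1), I would argue the contrapositive. Suppose $a \in \mbf F(X) \setminus X$. Using the canonical form theorem for free lattices, $a$ has a canonical representation as either a join $a = b \vee c$ with $b, c < a$ or a meet $a = b \wedge c$ with $a < b, c$; moreover $\mathrm{(W)}$ rules out its being both at once. In the proper-join case, $a \leq b \vee c$ while $a \not\leq b$ and $a \not\leq c$, so $a$ fails to be join prime. In the proper-meet case, $b \wedge c \leq a$ while $b \not\leq a$ and $c \not\leq a$, so $a$ fails to be meet prime. Either way $a$ is not doubly prime.

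The only real content is the appeal to the canonical form theorem to guarantee that every non-generator is strictly a proper join or strictly a proper meet; without that, one would have to worry about a non-generator $a$ being doubly irreducible yet still doubly prime. The main obstacle, therefore, is ensuring that the right structural result about free lattices is invoked; once that is in hand the rest is a one-line unwinding of the definitions of join prime and meet prime.
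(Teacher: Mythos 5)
Your proof is correct and is precisely the standard argument the paper relies on: the statement is given as a Fact without proof, the intended justification being exactly these classical consequences of Whitman's canonical form theorem (generators are join and meet prime as base cases of the $\mathrm{(W)}$-algorithm, and every non-generator is a proper join or a proper meet, hence fails join primality or meet primality respectively). Your identification of the one genuine input --- that the canonical form theorem guarantees every non-generator is strictly a proper join or a proper meet --- is the right place to put the weight, and nothing is missing.
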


    \begin{definition} We say that the a model $A$ is prime if it embeds elementarily in every model of its first-order theory. We say that $A$ is minimal if it has no proper elementary substructures.
    \end{definition}

        \begin{fact}[{\cite[Proposition~5.1]{nies}}] \label{four}
        Let $A$ be a countable structure. Then $A$ is a prime model of its theory iff, for every $0 < n < \omega$, each orbit under the natural action of $\mrm{Aut}(A)$ on $A^n$ is first-order definable without parameters in $A$.
        \end{fact}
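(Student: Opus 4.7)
The plan is to reduce Fact~\ref{four} to the classical characterization of countable prime models as countable \emph{atomic} models, and then translate atomicity into definability of orbits using homogeneity.

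First I would invoke the standard fact that a countable $A$ is a prime model of $\mathrm{Th}(A)$ if and only if $A$ is atomic, meaning that for every $n$ and every $\bar{a}\in A^n$ the complete type $\mathrm{tp}(\bar{a}/\emptyset)$ is isolated by some formula $\varphi_{\bar{a}}(\bar{x})$. Granted this, the statement to prove becomes: $A$ is atomic if and only if each orbit of $\mathrm{Aut}(A)$ on $A^n$ is $\emptyset$-definable in $A$.

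For the forward direction, assume $A$ is atomic. A routine back-and-forth argument, at each step using the isolating formula for the type over a finite tuple in order to realize it in $A$, shows that countable atomic models are $\aleph_0$-homogeneous. Therefore any two $n$-tuples in $A$ with the same complete type are conjugate by an automorphism, so the orbit of $\bar{a}$ coincides with the realization set of $\mathrm{tp}(\bar{a})$ in $A$; since $\varphi_{\bar{a}}$ isolates this type and $A\models\mathrm{Th}(A)$, the orbit is defined by $\varphi_{\bar{a}}$. For the converse, suppose the orbit $O_{\bar{a}}\subseteq A^n$ of each $\bar{a}$ is defined by a formula $\varphi_{\bar{a}}(\bar{x})$. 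Since automorphisms preserve types, every realization of $\varphi_{\bar{a}}$ in $A$ has the same type as $\bar{a}$, so for each $\psi(\bar{x})\in\mathrm{tp}(\bar{a})$ the universal implication $\forall\bar{x}\,(\varphi_{\bar{a}}(\bar{x})\to\psi(\bar{x}))$ holds in $A$. Completeness of $\mathrm{Th}(A)$ then promotes this to a theorem of the theory, so $\varphi_{\bar{a}}$ isolates $\mathrm{tp}(\bar{a})$, giving atomicity.

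The main obstacle is the passage from ``same type'' to ``same orbit'' in the forward direction, where countability and atomicity combine through back-and-forth to yield $\aleph_0$-homogeneity; without countability the equivalence fails in general. The converse is essentially formal, the only nontrivial move being the use of completeness of $\mathrm{Th}(A)$ to upgrade an $A$-valid universal implication into a consequence of the full theory, ensuring that the defining formula of an orbit genuinely isolates a complete type rather than merely cutting out a subset of it in~$A$.
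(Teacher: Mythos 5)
Your argument is correct and is the standard route to this result, which the paper only cites (to Nies) without proof: reduce ``prime'' to ``atomic'' for countable models, then use the homogeneity of countable atomic models to identify orbits with realization sets of isolated types, and run the formal converse via completeness of $\mathrm{Th}(A)$. The only implicit hypothesis worth flagging is that the language is countable (as it is here, being $\{\vee,\wedge\}$), since the direction ``prime $\Rightarrow$ atomic'' rests on the omitting types theorem.
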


	\begin{lemma} Let $\kappa$ be a cardinal.
	\begin{enumerate}[(1)]
	\item If $\kappa$ is finite, then $\mbf F_\kappa$ is a prime and minimal model of its theory.
	\item If $\kappa = \omega$, then $\mbf F_\kappa$ is a prime model of its theory but it is not minimal.
	\item If $\kappa > \omega$, then $\mbf F_{\omega}$ embeds elementarily in $\mbf F_\kappa$.
	\end{enumerate}
\end{lemma}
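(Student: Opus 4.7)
The plan is to apply Fact~\ref{four} for the two primality statements in (1) and (2), relying on the fact that the set of generators of a free lattice is uniformly definable as the set of doubly prime elements (Fact~\ref{doubly_prime_fact}). The remaining assertions---minimality in (1), failure of minimality in (2), and the elementary embedding in (3)---will be obtained from the Tarski-Vaught test, with the required witnesses supplied by automorphisms of free lattices that permute the generators.

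For (1), let $\mbf F_n = \mbf F(x_1,\dots,x_n)$ and write $\op{DP}(x)$ for the first-order formula expressing that $x$ is doubly prime. Given a tuple $\bar a = (a_1,\dots,a_k)$ with $a_j = t_j(x_1,\dots,x_n)$, I would show that the $\op{Aut}(\mbf F_n)$-orbit of $\bar a$ is defined by
\[
\exists y_1\cdots y_n \Bigl[\,\bigwedge_i \op{DP}(y_i)\,\wedge\,\bigwedge_{i\neq j} y_i\neq y_j\,\wedge\,\bigwedge_{j=1}^k a_j = t_j(y_1,\dots,y_n)\,\Bigr].
\]
By Fact~\ref{doubly_prime_fact} any witnessing tuple $(y_1,\dots,y_n)$ must be a permutation of $(x_1,\dots,x_n)$, which extends uniquely to an automorphism; conversely, every automorphism yields such witnesses. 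Primality then follows from Fact~\ref{four}. For minimality, the sentence ``there exist exactly $n$ doubly prime elements'' lies in $\op{Th}(\mbf F_n)$, so any $\mbf K\preceq \mbf F_n$ contains all doubly prime elements of $\mbf F_n$, i.e.\ all of $\{x_1,\dots,x_n\}$, forcing $\mbf K = \mbf F_n$.

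For (2), primality is obtained by exactly the same orbit-defining formula with $n$ replaced by the finitely many generators actually occurring in the $t_j$'s. Non-minimality will be witnessed by $\mbf K = \mbf F(x_2,x_3,\dots)\subsetneq \mbf F_\omega$, with properness following from Whitman's solution to the word problem (which prevents $x_1$ from being a lattice term in the other generators): given $\bar a\in \mbf K$ and a witness $c\in \mbf F_\omega$ for $\exists z\,\varphi(\bar a,z)$, pick $m\geq 2$ such that $x_m$ occurs in neither $\bar a$ nor $c$, and apply the automorphism $\sigma$ of $\mbf F_\omega$ that transposes $x_1\leftrightarrow x_m$; then $\sigma$ fixes $\bar a$ pointwise, sends $c$ into $\mbf K$, and $\sigma(c)$ still witnesses $\varphi(\bar a,\cdot)$, so Tarski-Vaught gives $\mbf K\preceq \mbf F_\omega$. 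Part (3) will be handled by the same swap argument: view $\mbf F_\omega$ as the sublattice of $\mbf F_\kappa$ generated by a countable subset of the free generators, and given $\bar a \in \mbf F_\omega$ and a witness $c \in \mbf F_\kappa$, transpose each generator of $\mbf F_\kappa$ appearing in $c$ but not in $\mbf F_\omega$ with a fresh generator of $\mbf F_\omega$ not occurring in $\bar a$ or $c$.

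I expect the main subtlety to be the verification that the orbit-defining formula in (1) and (2) really picks out exactly the $\op{Aut}(\mbf F_\kappa)$-orbit rather than some larger definable set; this reduces to the fact that any finite partial injection between free generators extends to a bijection of the whole generating set and hence to a global automorphism, combined with the doubly prime characterization from Fact~\ref{doubly_prime_fact}. Everything else is routine, modulo invoking Whitman's algorithm to verify properness of the sublattices chosen in (2) and (3).
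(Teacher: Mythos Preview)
Your proposal is correct and follows essentially the same approach as the paper: orbit definability via the doubly prime characterization (Fact~\ref{doubly_prime_fact}) together with Fact~\ref{four} for primality, the first-order count of doubly prime elements for minimality in (1), and automorphism-based Tarski--Vaught arguments for (2) and (3). If anything, your version is slightly more careful---you include the needed distinctness clause $y_i \neq y_j$ in the orbit formula and give an explicit Tarski--Vaught argument for non-minimality in (2), whereas the paper only remarks that the sublattice on a proper infinite subset of generators is ``also a prime model of its theory.''
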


	\begin{proof} We first prove that if $\kappa \leq \aleph_0$, then $\mbf F_\kappa = {\mbf F}(X)$ (so $|X| = \kappa$) is a prime model of its theory. To this extent, by Fact~\ref{four} it suffices to show that for every $n < \omega$ and for every $n$-tuple $\bar{a}$ of elements of $\mbf F_\kappa$, the $\mrm{Aut}(\mbf F_\kappa)$-orbit of $\bar{a} = (a_1, ..., a_n)$ is first-order definable in $\mbf F_\kappa$ without paramenters (notice that under our assumptions $\mbf F_\kappa$ is countable). For every $1 \leq i \leq n$, let $t_i(\bar{x})$ be a term in the variables $\bar{x} \subseteq X$ (adding variables possibly not actually occurring in $t_i(\bar{x})$ we can assume that $\bar{x}$ is the same for all the $i$'s) such that $t_i^{{\mbf F_\kappa}}(\bar x) = a_i$. Then $\bar{b} = (b_1, ..., b_n)$ is in the $\mrm{Aut}(\mbf F_\kappa)$-orbit of $\bar{a}$ iff there is a tuple $\bar{y}$ of the same length as $\bar{x}$ of doubly prime elements of $\mbf F_\kappa$ such that for every $1 \leq i \leq n$ we have $t_i^{{\mbf F_\kappa}}(\bar y) = b_i$, and by Fact~\ref{doubly_prime_fact} this is first-order. This proves that $\mbf F_\kappa$ is prime for $\kappa \leq \aleph_0$. The claim about the minimality and not minimality of $\mbf F_\kappa = {\mbf F}(X)$ is clear, as if $|X| = \aleph_0$ is infinite, then the sublattice generated by an infinite proper subset of $X$ is also a prime model of its theory, while if $X$ is finite this is not the case, as the existence of exactly $n < \omega$ doubly prime elements is expressible in first order logic. Finally, concerning (3), it suffices to show that for every finite subset $\{a_1, ..., a_m \}$ of $\mbf F_{\omega}$ and every element $b \in \mbf F_{\kappa}$ there exists an automorphism of $\mbf F_{\kappa}$ which fixes $\{a_1, ..., a_m \}$ and maps $b$ into $\mbf F_{\omega}$, and this is easy to see (this is a well-known general fact).
\end{proof}

	We now introduce the crucial notions of lower and upper bounded lattices. Our treatment of the subject will be brief, for more see e.g. \cite[Chapter~II]{the_book} or \cite{nation}.

	\begin{definition}\label{bounded_def} Let $\mbf K$ and $\mbf L$ be lattices. A homomorphism $f: \mbf K \rightarrow \mbf L$ is said to be lower bounded if for every $a \in \mbf L$, the set $\{ u \in \mbf K: f(u) \geq a \}$ is either empty or has a least element. A finitely generated
lattice $\mbf L$ is called lower bounded if every homomorphism $f: \mbf K \rightarrow \mbf L$, where $\mbf K$ is finitely generated, is lower bounded. Let $D_0(\mbf L)$ denote the set of join prime elements of\/ $\mbf L$, i.e., those
elements which have no nontrivial join-cover (cf.~\cite[pg. 29]{the_book}). For $k > 0$, let $a \in
D_k (\mbf L)$ if every
nontrivial join-cover $V$ of $a$ has a refinement $U \subseteq D_{k-1}(\mbf L)$ which is also a join-cover of $a$. Then a finitely generated lattice $\mbf L$ is lower bounded if and only if\/ $\bigcup_{k < \omega} D_{k}(\mbf L) = L$.
%
Observe that, from the definition, $D_0(\mbf L) \subseteq D_1(\mbf L) \subseteq D_2(\mbf L) \subseteq \cdots$. Thus, if\/ $\mbf L$ is
lower bounded and $a \in \mbf L$, we define $\rho(a)$, the $D$-rank of $a$, to be the least integer $k$ such that $a \in D_k(\mbf L)$.  
Every finitely generated lower bounded lattice has the minimal join-cover refinement property \cite[Cor.~2.19]{the_book}, which implies that every element is a finite join of join irreducible elements.  Thus we can define the $D$-rank of a lower bounded lattice $L$ to be $\mrm{sup}\{\rho(a) : a \in \op{J}(\mbf L )\}$. 
The notions of upper bounded homomorphism, upper bounded lattice, etc.~are defined dually. In the upper case we write $D^{\mrm{op}}$-rank of $a$, to distinguish the two notions.
\end{definition}

	\begin{fact}[\cite{nation}]\label{bounded_fact} Let $k < \omega$. The class of lattices all of whose finitely generated sublattices are lower and upper bounded of $D$-rank and $D^{\mrm{op}}$-rank $\leq k$ (cf.~Definition~\ref{bounded_def}) forms a variety $\mathcal{V}_k$. We denote by $\mbf B_{(n, k)}$ the free object of rank $n$ in the variety $\mathcal{V}_k$ and with $h_k$ the canonical homomorphism of\/ $\mbf F_n$ onto  $\mbf B_{(n, k)}$. Also, for $k < \ell < \omega$, the variety $\mathcal{V}_k$ is contained in the variety $\mathcal{V}_\ell$, and so, for fixed $n < \omega$, we let $f_{(k, \ell)}$ be the canonical homomorphism from $\mbf B_{(n, \ell)}$ onto $\mbf B_{(n, k)}$. Further, $(\mbf B_{(n, k)}, f_{(k, \ell)} : \ell \leq k < \omega)$ is an inverse system. 

 \smallskip
 \noindent
 Finally, we denote by $\mbf H_n$ the inverse limit of the inverse system $(\mbf B_{(n, k)}, f_{(k, \ell)} : \ell \leq k < \omega)$. The lattice $\mbf F_n$ embeds canonically into $\mbf H_n$. We refer to $\mbf H_n$ as the profinite-bounded completion of\/ $\mbf F_n$.
\end{fact}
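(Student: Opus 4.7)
The statement bundles four assertions: (a) $\mathcal{V}_k$ is a variety; (b) $\mbf B_{(n,k)}$ exists as the free object on $n$ generators with canonical surjection $h_k : \mbf F_n \onto \mbf B_{(n,k)}$; (c) the bonding maps $f_{(k,\ell)}$ form a compatible inverse system; and (d) the induced diagonal $\mbf F_n \to \mbf H_n$ is injective. For (a), the plan is to produce an equational axiomatization of $\mathcal{V}_k$ inductively on $k$. The base case encodes join-primeness (and dually meet-primeness): the containment ``$a \in D_0$'' is equivalent to the implication schema $a \leq b_1 \vee \cdots \vee b_m \rightarrow \bigvee_i (a \leq b_i)$, which packages into equational form by the standard trick for distributive-like axioms. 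The inductive step uses the J\'onsson--McKenzie correspondence (cf.~\cite[Thm.~2.3]{the_book}) between having every nontrivial join-cover refinable to $D_{k-1}$ and satisfying a schema of splitting identities whose terms are built from $D_{k-1}$-witnesses. Combined with the dual identities for $D^{\mrm{op}}$, this gives a schema $\Sigma_k$; closure under $\mbf H$, $\mbf S$, $\mbf P$ is then automatic by Birkhoff, so $\mathcal{V}_k$ is a variety.

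Parts (b) and (c) are then essentially formal. The free object $\mbf B_{(n,k)}$ is $\mbf F_n/\theta_k$, with $\theta_k$ the fully invariant congruence generated by $\Sigma_k$, and $h_k$ is the canonical projection. The chain $\mathcal{V}_k \subseteq \mathcal{V}_\ell$ for $k \leq \ell$ follows from $D_j \subseteq D_{j+1}$, whence $\theta_\ell \subseteq \theta_k$; the map $f_{(k,\ell)} : \mbf B_{(n,\ell)} \onto \mbf B_{(n,k)}$ is then induced by freeness, sending generators to generators. Both the triangle $h_k = f_{(k,\ell)} \circ h_\ell$ and the compatibility $f_{(k,\ell)} \circ f_{(\ell,m)} = f_{(k,m)}$ follow from uniqueness in the universal property.

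The main obstacle is (d): the injectivity of $a \mapsto (h_k(a))_{k<\omega}$, equivalently $\bigcap_k \ker h_k = \Delta_{\mbf F_n}$. The strategy is to show that any two distinct $a \neq b$ in $\mbf F_n$ are separated by some $h_k$. Here I would exploit Whitman's canonical form theorem (cf.~\cite[Ch.~I]{the_book}), which assigns to each $c \in \mbf F_n$ a well-defined term-complexity $d(c) < \omega$. By induction on $\max(d(a), d(b))$, using that generators lie in $D_0 \cap D^{\mrm{op}}_0$ and that one application of meet or join raises $D$-rank (resp.~$D^{\mrm{op}}$-rank) by at most one, one verifies that $a$ and $b$ have distinct images already in $\mbf B_{(n,k)}$ for some $k \leq \max(d(a),d(b))$. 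The technical core---and the step I expect to require the most care---is the bookkeeping that the J\'onsson--McKenzie rank calculation indeed tracks canonical term complexity; this is precisely the content of the cited result of \cite{nation}. Granting this, $\mbf F_n \hookrightarrow \mbf H_n$ via the diagonal map, whose image lands in the inverse limit by the compatibility established in part (c).
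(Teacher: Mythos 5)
The paper offers no proof of this statement: the substantive claim that $\mathcal{V}_k$ is a variety is simply cited to \cite{nation}, and the remaining assertions (the inverse system, the canonical embedding of $\mbf F_n$) are treated as standard. Measured against that, your parts (b) and (c) are correct formalities, and for part (a) it is legitimate to lean on the citation --- but your sketch of (a) should not be mistaken for a proof. Saying that ``$a \in D_0$'' ``packages into equational form by the standard trick for distributive-like axioms'' conflates join-primeness of \emph{all} elements with join-primeness of the join irreducibles of every finitely generated sublattice, and the inductive step --- converting the join-cover refinement condition into identities, equivalently proving closure under homomorphic images --- is exactly the nontrivial content of Nation's theorem, which you invoke rather than establish. (Also, Theorem~2.3 of \cite{the_book} is the $\beta$/$\alpha$ algorithm for bounded homomorphisms, not a correspondence between $D_{k-1}$-refinability and splitting identities.)

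The genuine gap is in part (d). Injectivity of $a \mapsto (h_k(a))_{k<\omega}$, i.e.\ $\bigcap_k \ker h_k = \Delta$, is not a bookkeeping statement about canonical term complexity, and it is not ``the content of the cited result of \cite{nation}.'' To separate $a \nleq b$ you must exhibit a lattice lying \emph{in some} $\mathcal{V}_k$ together with a homomorphism from $\mbf F_n$ under which the images remain incomparable; knowing that $a$ and $b$ have finite $D$-rank in $\mbf F_n$ says nothing about which pairs are collapsed by the verbal congruence defining $\mbf B_{(n,k)}$. The missing ingredient is Day's theorem \cite{RAD77} that free lattices are weakly atomic: given $a \nleq b$ there is a completely join irreducible $w$ with $w \leq a$ and $w \nleq b$, and for such $w$ the splitting congruence $\varphi_w$ is both lower and upper bounded, so that $\mbf F_n/\varphi_w$ is a finite lattice in $\mathcal{V}_k$ for $k$ the complexity of $w$ and the corresponding $h_k$ separates $a$ from $b$. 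This is precisely how the paper itself justifies $\psi\restriction_{\mbf F_n} = \Delta$ in Proposition~\ref{delta_prop}(2) and the discussion following it. Your induction on $\max(d(a),d(b))$ might be repaired into this argument, but as written it assumes the separation it is supposed to prove.
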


	\begin{lemma}\label{homomor_lemma} Let $\mbf K$ be an elementary extension of\/ $\mbf F_n = \mbf F(X)$ with $X = \{x_1, ..., x_n \}$ and let $\mbf L$ be a finite, upper and lower bounded lattice generated by $\{a_1, ..., a_n \}$. Then the homomorphism $f: \mbf F_n \rightarrow \mbf L$ such that $x_i \mapsto a_i$ extends canonically to a homomorphism $\hat{f}^{\mbf K} = \hat{f} : \mbf K \rightarrow \mbf L$. Further, in the context of Fact~\ref{bounded_fact}, $(\mbf K, \hat{h}^{\mbf K}_k : k < \omega)$ is a cone of the inverse system $(\mbf B_{(n, k)}, f_{(k, \ell)} : \ell \leq k < \omega)$, and thus there is a homomorphism $h: \mbf K \rightarrow \mbf H_n$ which commutes with $(\mbf B_{(n, k)}, f_{(k, \ell)} : \ell \leq k < \omega)$.
\end{lemma}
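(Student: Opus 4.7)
The plan is to extend $f$ by a canonical-form recipe that references only specific lattice terms in the generators $x_1, \dots, x_n$, so that the same formula makes sense verbatim in $\mbf K$. Since $\mbf L$ is finite and $f$ is lower and upper bounded, for each $a \in \mbf L$ there exist $\beta(a) = \min\{u \in \mbf F_n : f(u) \geq a\}$ and $\alpha(a) = \max\{u \in \mbf F_n : f(u) \leq a\}$, specific elements of $\mbf F_n$ which I view as lattice terms in $x_1, \dots, x_n$. I will use the standard canonical-form identities (cf.~\cite[Theorem~2.3]{the_book}): $f(\beta(a)) = a$ for every $a \in \op{J}(\mbf L)$, and $f(u) = \bigvee\{a \in \op{J}(\mbf L) : \beta(a) \leq u\}$ for every $u \in \mbf F_n$. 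Writing $\beta(a)^{\mbf K}$ for the interpretation of the term $\beta(a)$ in $\mbf K$, I define
\[ \hat f(u) := \bigvee\{a \in \op{J}(\mbf L) : \beta(a)^{\mbf K} \leq u\} \in \mbf L, \qquad u \in \mbf K, \]
which by design restricts to $f$ on $\mbf F_n$.

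The next step is the equivalence $a \leq \hat f(u) \Leftrightarrow \beta(a)^{\mbf K} \leq u$ for $a \in \op{J}(\mbf L)$ and $u \in \mbf K$. The $(\Leftarrow)$ direction is by definition; for $(\Rightarrow)$, if $a \leq \bigvee S$ in $\mbf L$ with $S = \{b : \beta(b)^{\mbf K} \leq u\}$, then in $\mbf F_n$ one has $f(\bigvee_{b \in S}\beta(b)) = \bigvee S \geq a$, so $\beta(a) \leq \bigvee_{b \in S}\beta(b)$ by the minimality of $\beta(a)$; this is an inequality between fixed terms in $x_1, \dots, x_n$ and so transfers to $\mbf K$, yielding $\beta(a)^{\mbf K} \leq u$. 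From this, $\wedge$-preservation is immediate: $a \leq \hat f(u \wedge v)$ iff $\beta(a)^{\mbf K} \leq u$ and $\beta(a)^{\mbf K} \leq v$, iff $a \leq \hat f(u) \wedge \hat f(v)$.

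I expect the main obstacle to be $\vee$-preservation, which I will handle by a first-order transfer. For each pair $S, T \subseteq \op{J}(\mbf L)$ I introduce the first-order formula $\phi_{S,T}(s,t)$, with parameters $x_1, \dots, x_n$, expressing $\{b \in \op{J}(\mbf L) : \beta(b) \leq s\} = S$ and $\{b \in \op{J}(\mbf L) : \beta(b) \leq t\} = T$; these finitely many types partition $\mbf F_n^2$ and $\mbf K^2$. For every $a \in \op{J}(\mbf L)$, applying the canonical-form identity to $s \vee t$ in $\mbf F_n$ gives the universal sentence
\[ \forall s,t\, \bigl[\phi_{S,T}(s,t) \to \beta(a) \leq s \vee t\bigr] \quad \text{if } a \leq {\textstyle\bigvee(S \cup T)} \text{ in } \mbf L, \]
and the analogous sentence with $\beta(a) \not\leq s \vee t$ otherwise. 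Both are first-order with parameters in $\mbf F_n$, so $\mbf F_n \preceq \mbf K$ transfers them to $\mbf K$. Evaluating at an arbitrary pair $(u, v) \in \mbf K^2$ of type $(S, T)$, so that $\hat f(u) = \bigvee S$ and $\hat f(v) = \bigvee T$, delivers $\hat f(u \vee v) = \bigvee(S \cup T) = \hat f(u) \vee \hat f(v)$.

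For the final part, I apply the construction to each $h_k : \mbf F_n \to \mbf B_{(n,k)}$ (finite by Fact~\ref{bounded_fact}) to obtain $\hat h_k^{\mbf K}$. To verify the cone condition $f_{(k,\ell)} \circ \hat h_\ell^{\mbf K} = \hat h_k^{\mbf K}$ for $k \leq \ell$, I check it on each $a \in \op{J}(\mbf B_{(n,k)})$: both sides are $\geq a$ iff $\beta_{h_k}(a)^{\mbf K} \leq u$. The two reductions use the compatibility $h_k = f_{(k,\ell)} \circ h_\ell$ to produce, in $\mbf F_n$, the comparisons $\beta_{h_k}(a) \leq \beta_{h_\ell}(b)$ whenever $f_{(k,\ell)}(b) \geq a$, together with the dual comparison, and these are inequalities between specific lattice terms that transfer to $\mbf K$ for free. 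Finally, the universal property of the inverse limit produces the desired homomorphism $h : \mbf K \to \mbf H_n$ commuting with all projections.
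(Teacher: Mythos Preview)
Your proof is correct, but it takes a somewhat different route from the paper for the verification that $\hat f$ is a homomorphism.

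The paper uses both adjoints $\alpha$ and $\beta$ symmetrically: it defines $\hat f(u)=a$ by the interval condition $\beta(a)\leq u\leq\alpha(a)$ (these intervals partition $\mbf K$ because the first-order statement that they partition $\mbf F_n$ transfers), and then join-preservation is a two-line computation using the term identities $\beta(c)\vee\beta(d)=\beta(c\vee d)$ and $\alpha(c)\vee\alpha(d)\leq\alpha(c\vee d)$, which hold already in $\mbf F_n\subseteq\mbf K$. Meet-preservation is dual.

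You instead work only with $\beta$ and the description $\hat f(u)=\bigvee\{a\in\op J(\mbf L):\beta(a)^{\mbf K}\leq u\}$. This makes meet-preservation immediate from the Galois-connection style equivalence $a\leq\hat f(u)\Leftrightarrow\beta(a)^{\mbf K}\leq u$, but forces you to handle join-preservation by the more elaborate device of partitioning pairs into finitely many types $\phi_{S,T}$ and transferring a universal sentence for each type. That argument is sound, but it is doing by first-order transfer what the paper does by the single term identity $\beta(c\vee d)=\beta(c)\vee\beta(d)$: indeed, once you have your key equivalence, $\hat f(u)\vee\hat f(v)=c\vee d$ with $c=\hat f(u)$, $d=\hat f(v)$, and then $\beta(c\vee d)^{\mbf K}=\beta(c)^{\mbf K}\vee\beta(d)^{\mbf K}\leq u\vee v$ gives $c\vee d\leq\hat f(u\vee v)$ directly, without any type decomposition. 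So your machinery can be shortened considerably. On the other hand, your treatment of the cone condition for the inverse system is more explicit than the paper's, which simply declares that part ``easy.''
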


	\begin{proof} As $L$ is bounded, necessarily $f: \mbf F_n \rightarrow \mbf L$ is bounded, i.e., the kernel of $f$ partitions $\mbf F_n$ into bounded congruence classes, that is, every congruence class $f^{-1}(a)/\mrm{ker}(f)$, for $a \in \mbf L$, has a least element $\beta(a)$ and a greatest element $\alpha(a)$. Thus, the equivalence classes of this partition are intervals of the form $[\beta(a), \alpha(a)]$ for $a \in \mbf L$, and for $u \in \mbf F_n$ we have that $f(u)=a$ iff $\beta(a) \leq u \leq \alpha(a)$. 
 Again, the algorithm for computing lower and upper bounds 
$\beta$ and $\alpha$ for congruence classes of the kernel 
of a bounded homomorphism
is given in Theorem 2.3 of \cite{the_book}.

 As $\mbf K$ is an elementary extension of $\mbf F_n$, this also determines a partition of $\mbf K$, let $\psi_{\mbf K}$ be the corresponding equivalence relation on $\mbf K$. Define then $\hat{f}^{\mbf K} = \hat{f} : \mbf K \rightarrow \mbf L$ as $\hat{f}(u) = a$ iff $u \in a/\psi_{\mbf K}$. We claim that $\hat{f}$ is a homomorphism. We show that $\hat{f}$ preserves joins, a dual argument works for meets. To this extent, remember that $\beta(c) \join \beta(d) = \beta(c \join d)$,
and $\alpha (c) \join \alpha(d) \leq \alpha(c \join d)$.
Thus if $u, v \in \mbf K$ and
$\beta(c) \leq u \leq \alpha(c)$ and $\beta(d) \leq v \leq \alpha(d)$,
then $\beta(c \join d) \leq u \join v \leq \alpha(c \join d)$. Finally, the ``further part'' of the lemma is easy as $\mbf H_n$ is the inverse limit of $(\mbf B_{(n, k)}, f_{(k, \ell)} : \ell \leq k < \omega)$.
\end{proof}

	\begin{example} In Figure~\ref{fig:pentagon} we see how $\mbf F_3$ (and thus any elementary extension $\mbf K$ of\/ $\mbf F_3$) gets partitioned into the pentagon $\mbf N_5$ via the natural map of\/ $\mbf F_3$ onto $\mbf N_5$.
    Note that the pentagon is in the variety $\mathcal V_1$, since $D_1(\mbf N_5) = \mbf N_5$.   
 
	
 \begin{figure}\label{pentagon}
\begin{center}
\begin{tikzpicture}[scale=.9]
\tikzstyle{every node}=[draw,circle,fill=white,minimum size=4pt,inner sep=0pt,label distance=1mm] 
    \node (0) at (0,0) [label=right:$xyz$] {};
    \node (1) at (0,2) [label=right:$z(x+y)$] {};
    \node (2) at (1,3) [label=right:$z$] {};
    \node (3) at (0,4) [label=right:$z+xy$] {};
    \node (4) at (0,6) [label=right:$x+y+z$] {};
    \node (5) at (-1,5) [label=left:$x \join y$] {};
    \node (6) at (-1,3) [label=right:$w$] {};
    \node (7) at (-2.35,3.65) [label=left:$y+z(x+y)$] {};
    \node (8) at (-2.35,1.65) [label=left:$xy$] {};

    \draw (0)--(8)--(6)--(3)--(2)--(1)--(7)--(5)--(4); 
    \draw (0) ..controls (-.55,1) .. (1);
    \draw (0) ..controls (.55,1) .. (1);
    \draw (3) ..controls (-.55,5) .. (4);
    \draw (3) ..controls (.55,5) .. (4);
    \draw (6) ..controls (-1.55,4) .. (5);
    \draw (6) ..controls (-.45,4) .. (5);
    \draw (7) ..controls (-2.9,2.65) .. (8);
    \draw (7) ..controls (-1.8,2.65) .. (8);
\end{tikzpicture}
\end{center} 
\caption{The bounded congruence classes of the natural map of $\mbf F_3$ onto the pentagon $\mbf N_5$, where we let $w=x(z+xy)$.}
\label{fig:pentagon}
\end{figure}
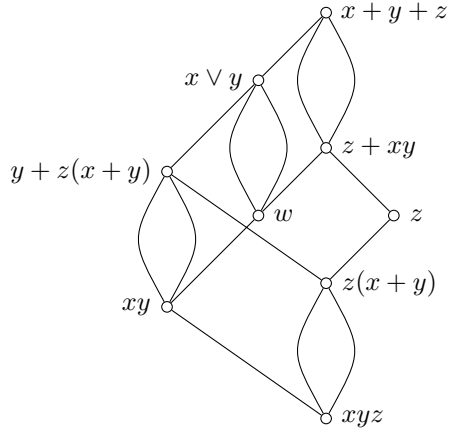
\end{example}

	\begin{notation}\label{psi_notation} As in the proof of \ref{homomor_lemma}, letting $\mbf K$ be an elementary extension of\/ $\mbf F_n$ we denote by $\psi = \psi_{\mbf K}$ the congruence induced by the homomorphism $h: \mbf K \rightarrow \mbf H_n$.
\end{notation}

	\begin{proposition}\label{delta_prop} In the context of Lemma~\ref{homomor_lemma} and Notation~\ref{psi_notation}:
	\begin{enumerate}[(1)]
	\item $\psi = \bigcap_{k < \omega} \varphi_k$, where $\varphi_k$ is the kernel of the homomorphism $\hat{h}^{\mathbf{K}}_k: \mbf K \rightarrow \mbf B_{(n, k)}$;
	\item $\psi \restriction_{\mbf F_n} = \Delta$ (the identity).
\end{enumerate}	
\end{proposition}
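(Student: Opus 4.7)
The plan is to prove both items by unwinding the definitions of $\psi$, the maps $\hat{h}^{\mathbf{K}}_k$, and the inverse-limit structure of $\mbf H_n$. Part (1) will be a standard inverse-limit argument, while part (2) will reduce to the fact, already included in Fact~\ref{bounded_fact}, that the canonical map $\mbf F_n \to \mbf H_n$ is an embedding.

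For (1), recall that $\mbf H_n$ is constructed in Fact~\ref{bounded_fact} as the inverse limit of $(\mbf B_{(n,k)}, f_{(k,\ell)} : \ell \leq k < \omega)$, so an element of $\mbf H_n$ is a compatible family $(b_k)_{k<\omega}$ with $b_k \in \mbf B_{(n,k)}$. The homomorphism $h : \mbf K \to \mbf H_n$ provided by Lemma~\ref{homomor_lemma} is precisely the map $u \mapsto (\hat h^{\mathbf{K}}_k(u))_{k<\omega}$ induced by the cone $(\mbf K, \hat h^{\mathbf{K}}_k : k<\omega)$. Therefore for $u,v \in \mbf K$ we have $(u,v) \in \psi$ iff $h(u)=h(v)$ iff $\hat h^{\mathbf{K}}_k(u) = \hat h^{\mathbf{K}}_k(v)$ for every $k<\omega$ iff $(u,v) \in \varphi_k$ for every $k<\omega$, which gives $\psi = \bigcap_{k<\omega} \varphi_k$.

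For (2), I first observe that for each $k < \omega$ the restriction $\hat h^{\mathbf{K}}_k \restriction_{\mbf F_n}$ coincides with $h_k$. Indeed, in the proof of Lemma~\ref{homomor_lemma} the map $\hat h^{\mathbf{K}}_k$ was defined by $\hat h^{\mathbf{K}}_k(u)=a$ iff $u$ belongs to the $\psi_{\mbf K}$-class associated with $a$, i.e.\ iff $\beta(a) \leq u \leq \alpha(a)$, where $\beta(a)$ and $\alpha(a)$ are the bounds of the $\mrm{ker}(h_k)$-class of $a$ inside $\mbf F_n$. For $u \in \mbf F_n$ this is exactly the condition $h_k(u)=a$, so $\hat h^{\mathbf{K}}_k(u) = h_k(u)$. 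Consequently, the composite $\mbf F_n \hookrightarrow \mbf K \xrightarrow{h} \mbf H_n$ agrees with the canonical map $\mbf F_n \to \varprojlim \mbf B_{(n,k)} = \mbf H_n$ induced by the family $(h_k)_{k<\omega}$. By the last sentence of Fact~\ref{bounded_fact}, this canonical map is an embedding; in particular, it is injective on $\mbf F_n$, which is exactly the assertion $\psi \restriction_{\mbf F_n} = \Delta$.

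The only non-routine point is the identification $\hat h^{\mathbf{K}}_k \restriction_{\mbf F_n} = h_k$, and its verification is immediate from the construction in Lemma~\ref{homomor_lemma}: the partition of $\mbf K$ by $\psi_{\mbf K}$ is obtained by enlarging each bounded interval $[\beta(a),\alpha(a)] \subseteq \mbf F_n$ to the set of elements of $\mbf K$ lying in that interval (this extension is well-defined precisely because $\mbf K$ is an elementary extension of $\mbf F_n$, so that the bounds $\beta(a),\alpha(a) \in \mbf F_n$ make sense in $\mbf K$), and on $\mbf F_n$ this partition is by construction the kernel of $h_k$. Once this bookkeeping is in place, the inverse-limit injectivity statement in Fact~\ref{bounded_fact} does all the remaining work.
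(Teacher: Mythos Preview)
Your proof is correct and follows the same outline as the paper's. For (1) the paper simply says ``clear,'' and your inverse-limit unwinding is exactly the intended justification. For (2) there is a small difference in packaging: you reduce to the last sentence of Fact~\ref{bounded_fact} (that $\mbf F_n$ embeds canonically in $\mbf H_n$), after checking $\hat h^{\mathbf K}_k\!\restriction_{\mbf F_n}=h_k$; the paper instead cites the underlying reason for that embedding, namely Day's theorem that free lattices are weakly atomic (\cite[Theorem~4.1]{the_book}), which is precisely what makes $\bigcap_k \ker(h_k)=\Delta$ on $\mbf F_n$. Since the embedding statement in Fact~\ref{bounded_fact} is itself a reformulation of this consequence of Day's theorem, the two arguments are the same in substance; the paper just names the source one level deeper.
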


	\begin{proof} (1) is clear, (2) is because of \cite[Theorem~4.1]{the_book}, which is Day's theorem that free lattices are weakly atomic
    \cite{RAD77}; see also \cite[Section~II.7]{the_book}.
\end{proof}

Let us elaborate on Day's theorem, which plays a crucial role in the arguments below. 
For each $w \in \mbf F_n$ there is a congruence $\varphi_w$ which is maximal with respect to the
property that $(v,w) \notin \varphi$ for any $v < w$.  It turns out that $\mbf F_n/\varphi_w$ is a finite lower bounded lattice of $D$-rank $\rho(w)$.  Moreover, a join irreducible element $w$ is completely join irreducible if and only if $\varphi_w$ is both lower and upper bounded. 
In that case, $\mbf F_n/\varphi_w$ is in $\mathcal V_n$ where $n$ is the complexity of $w$,
that is, $w \in X^{\meet(\join\meet)^n}$.

Day's theorem says that the completely join irreducible elements of $\mbf F_n$ are join-dense,
i.e., if $u \nleq v$ in $\mbf F_n$ then there exists a completely join irreducible element
$w$ with $w \leq u$ and $w \nleq v$.  Thus every element in a free lattice is the join of the
completely join irreducible elements below it.

	\begin{fact}\label{observation_beta_n} As in the proof of \ref{homomor_lemma}, letting $h_k: {\mbf F_n} \rightarrow {\mbf B_{(n, k)}}$ be the canonical homomorphism and letting $\alpha_k(u)$ and $\beta_k(u)$ be, respectively, the greatest and least element of the equivalence class $u/\mrm{ker}(h_k)$, then to every element $u \in {\mbf F_n}$ we can associate sequences $(\beta_k(u) : k < \omega), (\alpha_k(u) : k < \omega) \in ({\mbf F_n})^{\omega}$ such that:
\begin{equation}\tag{$\star$}
\beta_0(u) \leq \beta_1(u) \leq \cdots \leq u \leq \cdots \leq \alpha_1(u) \leq \alpha_0(u).
\end{equation}
In fact, to every element $\mbf{c} = (c_k : k < \omega) \in \mbf{H}_n$, letting $\alpha_k(\mbf{c})$ be the greatest element of $h^{-1}_{k}(c_k) \subseteq \mbf{F}_n$ and $\beta_k(\mbf{c})$ be the least element of $h^{-1}_{k}(c_k) \subseteq \mbf{F}_n$ we have:
\begin{equation}\tag{$\star\star$}
\beta_0(\mbf{c}) \leq \beta_1(\mbf{c}) \leq \cdots \leq \cdots \leq \alpha_1(\mbf{c}) \leq \alpha_0(\mbf{c}).
\end{equation}
For $\mbf{a} \in \mbf{F}_n$, this double sequence corresponds to the sequence $(\star)$ identifying $\mbf{F}_n$ with its canonical embedding into $\mbf{H}_n$ (cf.~what has been said at the end of \ref{bounded_fact}). As a piece of notation, given $\mbf{c} \in H$ we also write $\mbf{c} = (\mbf{b}, \mbf{a})$, where we let:
$$\mbf{b} = (\beta_j(\mbf{c}) = b_j : j < \omega) \;\; \text{ and } \;\; \mbf{a} = (\alpha_j(\mbf{c}) = a_j : j < \omega).$$
With respect to this identification, the lattice order of\/ $\mbf{H}_n$ can be characterized as:
$$\mbf{c} \leq \mbf{c}' \;\; \Leftrightarrow \;\; \mbf{b} \leq \mbf{b}' \;\; \Leftrightarrow \;\; b_j \leq b'_j \; \text{ for all } j < \omega$$
$$\mbf{c} \leq \mbf{c}' \;\; \Leftrightarrow \;\; \mbf{a} \leq \mbf{a}' \;\; \Leftrightarrow \;\; a_j \leq a'_j \; \text{ for all } j < \omega.$$
\end{fact}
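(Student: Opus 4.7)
My plan is to deduce all assertions from one structural fact about each canonical map $h_k : \mbf{F}_n \to \mbf{B}_{(n,k)}$: being a homomorphism from the finitely generated lattice $\mbf F_n$ onto a finitely generated lattice in $\mathcal V_k$, the map $h_k$ is simultaneously lower and upper bounded by Definition~\ref{bounded_def} and Fact~\ref{bounded_fact}. Consequently, each fibre $h_k^{-1}(c)$ is a non-empty interval $[\beta_k(c),\alpha_k(c)]$ in $\mbf F_n$, and the bounding maps $\beta_k,\alpha_k : \mbf B_{(n,k)} \to \mbf F_n$ are characterised by the Galois conditions
\[
h_k(u) \geq c \iff u \geq \beta_k(c), \qquad h_k(u) \leq c \iff u \leq \alpha_k(c).
\]
Writing $\beta_k(u)$ for $\beta_k(h_k(u))$ and analogously for $\alpha_k$, taking $c = h_k(u)$ instantly gives $\beta_k(u) \leq u \leq \alpha_k(u)$, i.e.\ the middle part of $(\star)$.

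For the monotonicity $\beta_k(u) \leq \beta_{k+1}(u)$ and $\alpha_{k+1}(u) \leq \alpha_k(u)$, the key ingredient is $\mathcal V_k \subseteq \mathcal V_{k+1}$, which yields the factorisation $h_k = f_{(k,k+1)} \circ h_{k+1}$ by the universal property of $\mbf B_{(n,k+1)}$. Hence $\mrm{ker}(h_{k+1}) \subseteq \mrm{ker}(h_k)$, so the class $u/\mrm{ker}(h_{k+1})$ sits inside $u/\mrm{ker}(h_k)$; passing to infima and suprema completes $(\star)$. Replacing $u \in \mbf F_n$ by $\mbf c \in \mbf H_n$, the inverse-system compatibility $f_{(k,k+1)}(c_{k+1}) = c_k$ gives $h_{k+1}^{-1}(c_{k+1}) \subseteq h_k^{-1}(c_k)$, and the identical argument produces $(\star\star)$. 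Coherence with the canonical embedding $\mbf F_n \hookrightarrow \mbf H_n$ is immediate because, for $\mbf a \in \mbf F_n$ identified with $(h_k(\mbf a))_k$, the preimage appearing in the $\mbf H_n$-definition is by construction $\mbf a/\mrm{ker}(h_k)$.

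Finally, for the order characterisation, the inverse limit carries the componentwise order, so $\mbf c \leq \mbf c'$ holds iff $c_k \leq c_k'$ for every $k$. The forward direction of the $\mbf b$-criterion uses the Galois condition: from $h_k(b_k') = c_k' \geq c_k$ we conclude $b_k' \geq \beta_k(c_k) = b_k$. The converse is monotonicity of $h_k$: if $b_k \leq b_k'$ then $c_k = h_k(b_k) \leq h_k(b_k') = c_k'$. The dual pair of arguments, using the right-adjoint characterisation of $\alpha_k$, handles the $\mbf a$-version, so both equivalences in the displayed formulas fall out uniformly.

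The only point I expect to require real care is the first one: confirming that each $h_k$ is actually lower and upper bounded as a map defined on $\mbf F_n$ (rather than merely between finite lattices). This is not literally Definition~\ref{bounded_def} but follows from it because $\mbf F_n$ is finitely generated and $\mbf B_{(n,k)} \in \mathcal V_k$ is itself lower and upper bounded in the sense of that definition; the explicit algorithm of Theorem~2.3 of \cite{the_book}, which terminates precisely because the $D$- and $D^{\mathrm{op}}$-ranks are bounded by $k$, makes $\beta_k$ and $\alpha_k$ genuinely available on all of $\mbf B_{(n,k)}$.
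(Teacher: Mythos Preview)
Your proposal is correct and matches the paper's implicit reasoning. The paper states this result as a \emph{Fact} without a separate proof: the bounded-homomorphism structure of each $h_k$ (with the $\beta_k,\alpha_k$ adjoints) is already set up in the proof of Lemma~\ref{homomor_lemma} via Theorem~2.3 of \cite{the_book}, and the nesting of kernels and the componentwise description of the inverse-limit order are taken as routine. Your write-up is exactly the unpacking of those ingredients, so there is nothing to add or correct.
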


An element $w$ in a lattice $\mbf L$ is \emph{lower atomic} if 
for all $u < w$ there exists $v$ such that $u \leq v \prec w$.
The dual condition is called \emph{upper atomic}, and $w$ is
\emph{totally atomic} if it is both lower and upper atomic.  

For an element $w$ of a finitely generated free lattice $\mbf F_n$,
there is a one-to-one correspondence between the lower
covers of $w$ and its completely join irreducible canonical joinands.
If every canonical joinand is completely join irreducible, then $w$ is
lower atomic.  The dual statements hold for upper atomic and completely
meet irreducible canonical meetands.  See Theorem~3.5 and Corollary~3.8
of \cite{the_book}, expanded in Theorem~3.26 and Corollary~3.27.

\begin{fact}\label{four_type_elements} In the context of \ref{observation_beta_n}, there are four types of elements in $\mbf F_n$:
\begin{enumerate}[(1)]
	\item  the ones such that the $\beta$'s and the $\alpha$'s are eventually constant;
	\item the ones such that the $\beta$'s are eventually constant but the $\alpha$'s are not;
	\item the ones such that the $\alpha$'s are eventually constant but the $\beta$'s are not;
	\item the ones such that neither the $\beta$'s nor the $\alpha$'s are eventually constant.
\end{enumerate}
Furthermore, the four types above admit the following algebraic description:
\begin{enumerate}[(1$'$)]
	\item $c \in \mbf F_n$ is as in (1) iff $c$ is totally atomic;
	\item $c \in \mbf F_n$ is as in (2) iff $c$ is lower atomic but not upper atomic;
	\item $c \in \mbf F_n$ is as in (3) iff $c$ is upper atomic but not lower atomic;
	\item $c \in \mbf F_n$ is as in (4) iff $c$ is neither upper nor lower atomic.
\end{enumerate} 
Notice that by \cite[Chapter~6]{the_book} the number of elements of type (1$'$) is finite.
\end{fact}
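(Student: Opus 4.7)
Plan. The list (1)--(4) tautologically partitions $\mbf F_n$ by the two independent yes/no choices ``$(\beta_j(c))_{j<\omega}$ is eventually constant?'' and ``$(\alpha_j(c))_{j<\omega}$ is eventually constant?'', and the list (1$'$)--(4$'$) does so by ``$c$ is lower atomic?'' and ``$c$ is upper atomic?''. By the self-dual construction of $\mbf H_n$, the fact reduces to establishing the single biconditional
\[ (\beta_j(c))_{j<\omega} \text{ is eventually constant} \iff c \text{ is lower atomic}, \]
together with its dual. The asserted finiteness of the set of totally atomic (type~(1$'$)) elements is then precisely the content of Chapter~6 of \cite{the_book}, applied to elements whose canonical join and meet decompositions both consist entirely of completely join (resp.~meet) irreducibles.

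For ``$\Leftarrow$'' I would use the canonical join decomposition $c = w_1 \vee \cdots \vee w_r$ in $\mbf F_n$; by the characterisation just before Fact~\ref{four_type_elements}, lower atomicity of $c$ forces each joinand $w_i$ to be completely join irreducible. For each such $w_i$, Day's theorem (as elaborated after Proposition~\ref{delta_prop}) states that $\varphi_{w_i}$ is both lower and upper bounded and that $\mbf F_n/\varphi_{w_i} \in \mathcal V_{k_i}$, where $k_i$ is the complexity of $w_i$; hence $\ker h_{k_i} \subseteq \varphi_{w_i}$, and since $\varphi_{w_i}$ separates $w_i$ from every strictly smaller element, so does $\ker h_{k_i}$, giving $\beta_{k_i}(w_i) = w_i$. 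Setting $K := \max_i k_i$ and using that the section $\beta_K : \mbf B_{(n,K)} \to \mbf F_n$ is join-preserving, we compute
\[ \beta_K(c) \;=\; \beta_K\!\Bigl( \bigvee_i h_K(w_i) \Bigr) \;=\; \bigvee_i \beta_K(h_K(w_i)) \;=\; \bigvee_i w_i \;=\; c. \]
For ``$\Rightarrow$'' I would argue by contrapositive: if some canonical joinand $w_i$ of $c$ is not completely join irreducible, I would produce, for each $j$, a strict lower replacement $w_i' < w_i$ with $h_j(w_i') = h_j(w_i)$; then $c' := w_i' \vee \bigvee_{i'\neq i} w_{i'}$ is strictly less than $c$ by canonicity of the decomposition, yet $h_j(c') = h_j(c)$, whence $\beta_j(c) \leq c' < c$ for every $j$.

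The core substep — that a join irreducible element $w$ which is not completely join irreducible satisfies $\beta_j(w) < w$ for every $j$ — is the principal difficulty, and I would attack its contrapositive: if $\beta_{j_0}(w) = w$ for some $j_0$, then $\ker h_{j_0}$ separates $w$ from every strictly smaller element and is therefore contained in $\varphi_w$; since $\ker h_{j_0}$ is itself both lower and upper bounded, I expect this to force $\varphi_w$ to be upper bounded as well, and hence $w$ to be completely join irreducible. Making this inference airtight demands the precise correspondence, recalled after Proposition~\ref{delta_prop}, between complexity of lattice terms, $D$-rank, and membership in the varieties $\mathcal V_k$; once that correspondence is deployed, the biconditional drops out without further combinatorics, and its dual is then obtained by reversing the lattice order throughout.
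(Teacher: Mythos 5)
Your route is the one the paper intends: the statement is given as a Fact resting on the preceding discussion (the bijection between lower covers of an element and its completely join irreducible canonical joinands, i.e.\ Theorem~3.26 and Corollary~3.27 of \cite{the_book}, together with the elaboration of Day's theorem), and your argument is essentially a write-up of that. The ``$\Leftarrow$'' computation is sound: $\ker h_{k_i} \subseteq \varphi_{w_i}$ because $\mbf F_n/\varphi_{w_i} \in \mathcal V_{k_i}$ and $\mbf B_{(n,k_i)}$ is free in $\mathcal V_{k_i}$, and $\beta_K$ preserves finite joins by the identity $\beta(c)\join\beta(d)=\beta(c\join d)$ recalled in Lemma~\ref{homomor_lemma}. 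Two small repairs: (a) the paper states only ``all canonical joinands CJI $\Rightarrow$ lower atomic''; you also use the converse, which is Theorem~3.26 of \cite{the_book} and should be cited as such; (b) in the ``$\Rightarrow$'' direction you obtain $\beta_j(c)<c$ for every $j$, and to conclude that the $\beta_j(c)$ are not eventually constant you need the one-line remark that an eventually constant value of this increasing sequence must equal $c$, since $\bigcap_k\varphi_k$ restricts to the identity on $\mbf F_n$ by Proposition~\ref{delta_prop}(2).

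The one genuine soft spot is your ``core substep'', where you write that you \emph{expect} $\ker h_{j_0}\subseteq\varphi_w$ to force $\varphi_w$ to be upper bounded. As stated this leans on two unproved facts (that $\varphi_w$ is the unique \emph{largest} congruence separating $w$ from $\downarrow\! w\setminus\{w\}$, so that the containment even holds, and that a finite-index congruence containing a bounded finite-index congruence is itself bounded). It is cleaner to bypass $\varphi_w$ altogether and use only that $h_{j_0}$ is upper bounded: if $w$ is join irreducible and $\beta_{j_0}(w)=w$, then $\{v : v<w\}$ is closed under finite joins (by join irreducibility), hence directed, so its image in the finite lattice $\mbf B_{(n,j_0)}$ has a maximum $d=h_{j_0}(v_0)$ for some $v_0<w$; since $\beta_{j_0}(w)=w$ we get $d<h_{j_0}(w)$, so every $v<w$ satisfies $v\leq\alpha_{j_0}(d)$ while $w\nleq\alpha_{j_0}(d)$, whence $\bigvee\{v : v<w\}\leq w\meet\alpha_{j_0}(d)<w$ and $w$ is completely join irreducible. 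With that substitution (and its dual) your proof closes; the finiteness of the totally atomic elements is, as you say, just the citation to Chapter~6 of \cite{the_book}.
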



	Concerning the Dedekind-MacNeille completion (below) see e.g. \cite[pp. 165-169]{priestley}.

	\begin{notation} Let $\mbf P$ be a poset. We denote by $\op{DM}(\mbf P)$ the Dedekind-MacNeille completion of\/ $\mbf P$, i.e., the set of subsets of $P$ such that $C = C^{u \ell}$, where for $D \subseteq P$:
	$$D^u = \{ p \in P : \forall d \in D, \; d \leq p\} \; \text{ and } \; D^\ell = \{ p \in P : \forall d \in D, \; p \leq d\}.$$
$\op{DM}(\mbf P)$ is ordered by inclusion. 
Recall that $\op{CJI}(\mbf P)$ (resp. $\op{CMI}(\mbf P)$) denotes the set of completely join irreducible (resp. completely meet irreducible) elements of\/ $\mbf P$. For $k < \omega$, we let $\op{CJI}_k(\mbf P)$ denote the set of completely join irreducible elements of\/ $\mbf P$ of $D$-rank $\leq k$. We shorten completely join irreducible with $\op{CJI}$ and, when clear from the context, we write $\op{CJI}_k$ and $\op{CJI}$ \mbox{instead of\/ $\op{CJI}_k(\mbf P)$ and $\op{CJI}(\mbf P)$.}
\end{notation}

\begin{theorem}  \label{lemma:claim2} 
For $\mbf F_n$ with $n$ finite, $\mbf H_n \cong \op{DM}(\mbf F_n)$.
\end{theorem}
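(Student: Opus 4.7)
My plan is to identify $\mbf H_n$ with $\mrm{DM}(\mbf F_n)$ by verifying the universal characterization of the Dedekind--MacNeille completion: any complete lattice containing $\mbf F_n$ as a join-dense and meet-dense sub-poset is canonically isomorphic to $\mrm{DM}(\mbf F_n)$. Concretely, I will check that (i) $\mbf H_n$ is a complete lattice, (ii) $\mbf F_n$ embeds canonically into $\mbf H_n$ as a sub-poset (already recorded in Fact~\ref{bounded_fact} together with Proposition~\ref{delta_prop}(2)), and (iii) every $\mbf c \in \mbf H_n$ equals both $\bigvee_k \beta_k(\mbf c)$ and $\bigwedge_k \alpha_k(\mbf c)$ computed in $\mbf H_n$, with the $\beta_k$'s and $\alpha_k$'s viewed as elements of $\mbf F_n \subseteq \mbf H_n$ via the canonical embedding.

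Item (i) is standard: since $\mbf H_n$ is an inverse limit of the finite (hence complete) lattices $\mbf B_{(n,k)}$ along lattice homomorphisms $f_{(k,\ell)}$, arbitrary suprema and infima can be computed coordinatewise; compatibility with the transition maps is automatic because a lattice homomorphism between finite lattices preserves all meets and joins.

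For item (iii), fix $\mbf c = (c_j)_{j<\omega} \in \mbf H_n$. The key coordinatewise check is that $\beta_k(\mbf c) \leq \mbf c$ in $\mbf H_n$ for every $k$: for $j \geq k$, applying $h_j$ to the monotonicity $\beta_k(\mbf c) \leq \beta_j(\mbf c)$ from Fact~\ref{observation_beta_n} yields $h_j(\beta_k(\mbf c)) \leq h_j(\beta_j(\mbf c)) = c_j$; for $j < k$, the inverse system compatibility $h_j = f_{(j,k)} \circ h_k$ gives $h_j(\beta_k(\mbf c)) = f_{(j,k)}(c_k) = c_j$. Tightness is then trivial: if $\mbf d = (d_j)_j \in \mbf H_n$ satisfies $\mbf d \geq \beta_k(\mbf c)$ for every $k$, examining coordinate $k$ gives $d_k \geq h_k(\beta_k(\mbf c)) = c_k$, whence $\mbf d \geq \mbf c$. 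This shows $\mbf c = \bigvee_k \beta_k(\mbf c)$ in $\mbf H_n$, so $\mbf F_n$ is join-dense; meet-density via the $\alpha$-sequence is entirely dual.

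The main obstacle I anticipate is conceptual rather than technical: recognizing that the canonical approximants $\beta_k(\mbf c)$ and $\alpha_k(\mbf c)$ already built into the inverse system are the correct witnesses for density. Once this is identified, both verifications reduce to coordinatewise one-line checks that use only the monotonicity recorded in Fact~\ref{observation_beta_n} and the compatibility of the inverse system, and the universal property of $\mrm{DM}(\mbf F_n)$ then delivers the isomorphism $\mbf H_n \cong \mrm{DM}(\mbf F_n)$.
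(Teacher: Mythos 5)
Your proposal is correct, but it takes a genuinely different route from the paper. The paper constructs an explicit map $\varphi:\mbf H_n\to\op{DM}(\mbf F_n)$, $\varphi(\mbf c)=A^\ell$ with $A=\{\alpha_j(\mbf c)\}$, and verifies it is an order isomorphism onto the DM-closed ideals; the work there is carried by two lemmas comparing ideals via their completely join irreducible elements, with Day's theorem (join-density of $\op{CJI}(\mbf F_n)$) invoked repeatedly for injectivity and surjectivity. You instead appeal to the abstract characterization of the MacNeille completion as the unique complete lattice in which the poset sits both join-densely and meet-densely, and your three verifications are sound: completeness of $\mbf H_n$ does follow from coordinatewise sups/infs (the transition maps $f_{(k,\ell)}$ are surjective homomorphisms of finite lattices, so they preserve all joins and meets and the coordinatewise extremum of a compatible family is again compatible); the coordinatewise checks that $\beta_k(\mbf c)\leq\mbf c\leq\alpha_k(\mbf c)$ and that $\mbf c$ is the least upper bound of the $\beta_k(\mbf c)$ (dually, greatest lower bound of the $\alpha_k(\mbf c)$) are exactly as you say. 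Note that Day's theorem has not disappeared from your argument --- it is hidden in the fact that the canonical map $\mbf F_n\to\mbf H_n$ is an order-embedding (Proposition~\ref{delta_prop}(2)), which the universal characterization requires --- but since you cite that established fact, this is legitimate. What the paper's heavier route buys is an explicit description of the isomorphism, identifying each $\mbf c\in\mbf H_n$ with the concrete DM-closed ideal $A^\ell$, which is convenient for the later identification of $\op{DM}(\mbf F_n)$ with $\mbf H_n$ inside $\op{Id}(\mbf F_n)$; what your route buys is brevity and a cleaner separation of the completion-theoretic content from the lattice-theoretic input.
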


\begin{proof}
For a sequence $\mbf c = ( \mbf b,\mbf a )$ in $\mbf H_n$, let us set notation:
\begin{align*}
A &= \{ a_j : j \in \omega \} \\
B &= \{ b_j : j \in \omega \} \\
A^\ell &= \bigcap \downarrow\! a_j \\
\downarrow\! B &= \bigcup \downarrow\! b_j
\end{align*}

\begin{lemma} \label{lemma:inter}
$\downarrow\! B \,\cap\, \op{CJI} = A^\ell \,\cap\, \op{CJI}$.  
\end{lemma}

\begin{proof}
First we show $\downarrow\! B \,\cap\, \op{CJI} \subseteq A^\ell$.
Consider $b_j$ and arbitrary $k$, w.l.o.g.~$k \geq j$ as the $a_j$'s are descending.  
Then $b_j \leq b_k \leq a_k$.
Next $A^\ell \,\cap\, \op{CJI} \subseteq\  \downarrow\! B$.  
Let $w \in \mrm{LHS}$ of rank $j$.  Then $w \leq a_j$ implies $w = h_j(w) \leq h_j(a_j)$,
whence $w = \beta h_j(w) \leq \beta h_j(a_j) = b_j$.
\end{proof}

\begin{lemma} \label{lemma:inter2}
Let\/ $I$ and $J$ be $\op{DM}$-closed ideals of\/ $\mbf F_n$.
If\/ $I \,\cap\, \op{CJI} = J \,\cap\, \op{CJI}$, then $I = J$. 
\end{lemma}

\begin{proof}
$\op{DM}$-closed ideals, which are of the form $D^{\ell}$, are closed under joins
that exist in $\mbf F_n$.  On the other hand, by Day's theorem, every element of $\mbf F_n$
is a (possibly infinite) join of CJI elements.
\end{proof}

\smallskip
\noindent Now we set about establishing the isomorphism of Theorem~\ref{lemma:claim2}.
Define a map
$\varphi : \mbf H_n \to \op{DM}(\mbf H_n)$ 
by $\varphi(\mbf c) = A^\ell$
for each $\mbf c = ( \mbf b,\mbf a )$ in $\mbf H_n$.
Recall that $\mbf{c} \leq \mbf{c}' \;\; \Leftrightarrow \;\; \mbf{b} \leq \mbf{b}'$ by Fact~\ref{observation_beta_n}.
Then:
\begin{enumerate}[(1)]
\item  $B \subseteq A^\ell$ and $A^\ell \in \op{DM}(\mbf F_n)$.
\item  $\varphi$ is order-preserving:  $\mbf b \leq \mbf b'$ means $b_j \leq b_j'$ for all $j$. 
          It follows that $a_j \leq a_j'$ for all $j$, so $A^\ell \subseteq (A')^\ell$.
\item  $\varphi$ is 1-to-1:  $\mbf b \nleq \mbf b'$ implies there exists $j$ with $b_j \nleq b_j'$.
         Then by Day's theorem \cite{RAD77} there is a CJI $u$ such that $u \leq b_j$ and $u \nleq b_j'$, whence
         $h_j(b_j') \ngeq u$.  Thus $a_j' \ngeq u$, as in general $h_j(u) \leq v$ iff $u \leq \alpha (v)$,  
         whence $a_j' \ngeq b_j$.
\item $\varphi$ is onto:   let $I = I^{u\ell}$ be a $\op{DM}$-closed ideal.  
         Define a sequence $b_j = \Join (I \cap \op{CJI}_j)$, which makes sense because $\op{CJI}_j$ is finite.
         Because every element of $\mbf F_n$ is a join of CJI elements and 
        $B = \{ b_j : j \in \omega \} \subseteq I$, we have $I \cap \op{CJI} = \  \downarrow\! B \,\cap\, \op{CJI}$.
        By Lemma~\ref{lemma:inter} this yields $I \,\cap\, \op{CJI}= A^\ell \,\cap\, \op{CJI}$, whence by 
        Lemma~\ref{lemma:inter2}, $I = A^\ell = \varphi(\mbf c)$ is in the range of $\varphi$.         
\end{enumerate}
\end{proof}

\begin{lemma} \label{lemma:claim1} 
Let $\mbf L_0$ be a lattice admitting a fixed-point free polynomial $p(x, \mbf {b})$, with $\mbf {b}$ a finite sequence in $\mbf L_0$ and let $\mbf L_1$ be a complete lattice containing $\mbf L_0$.
Then there is a positive $\forall\exists$-sentence true in $\mbf L_1$ and false in $\mbf L_0$.
\end{lemma}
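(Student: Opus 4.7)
The plan is to use the existence of fixed points guaranteed by the Knaster--Tarski theorem in complete lattices. Since $p(x,\mathbf{b})$ is a polynomial, there is a lattice term $t(x,\mathbf{y})$ such that $p(x,\mathbf{b}) = t(x,\mathbf{b})$; the hypothesis says that the map $x \mapsto t(x,\mathbf{b})$ on $\mathbf{L}_0$ has no fixed point. I propose the sentence
\[
\pi \;:=\; \forall\, \mathbf{y}\ \exists\, x\ \bigl(\, t(x,\mathbf{y}) \leq x \ \&\ x \leq t(x,\mathbf{y})\,\bigr),
\]
which is a positive $\forall\exists$-sentence (equality is expressed via the conjunction of two inequalities, and each inequality $u \leq v$ is positive, e.g.\ by $u \join v = v$).

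To see that $\pi$ fails in $\mathbf{L}_0$, I would simply specialize $\mathbf{y}$ to the given tuple $\mathbf{b}$; then $\exists x\ t(x,\mathbf{b}) = x$ would contradict the fixed-point-freeness of $p(x,\mathbf{b})$.

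To see that $\pi$ holds in $\mathbf{L}_1$, fix an arbitrary tuple $\mathbf{y} \in \mathbf{L}_1$ and consider the map $f : \mathbf{L}_1 \to \mathbf{L}_1$ given by $f(x) = t(x,\mathbf{y})$. Since every lattice term is built from $\join$ and $\meet$, which are monotone in each argument, $f$ is an order-preserving map on the complete lattice $\mathbf{L}_1$. The Knaster--Tarski fixed point theorem then yields some $x_0 \in \mathbf{L}_1$ with $f(x_0) = x_0$, i.e.\ $t(x_0,\mathbf{y}) = x_0$, verifying the inner existential statement.

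The ``hard'' part is essentially cosmetic: checking that $\pi$ is genuinely positive $\forall\exists$ (which reduces to the standard observation that equality and $\leq$ are expressible positively in the lattice language) and that ``polynomial'' here does correspond to a term with parameters in the usual universal-algebra sense, so that $\pi$ can be stated in the pure lattice language by universally quantifying over the parameter tuple. Once these points are in order, the argument is a one-line application of Knaster--Tarski.
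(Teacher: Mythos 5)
Your proof is correct and follows essentially the same route as the paper: the distinguishing sentence is $\forall\mathbf{y}\,\exists x\,(p(x,\mathbf{y})=x)$, which fails in $\mathbf L_0$ by the choice of $p$ and holds in $\mathbf L_1$ by the Knaster--Tarski fixed-point theorem applied to the monotone polynomial function. The paper even makes the fixed point explicit as $\bigvee\{a : a\leq p(a,\mathbf b)\}$, but this is the same argument.
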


\begin{proof}
For $j = 0, 1$, let $p_j: \mbf L_j \to \mbf L_j$ be such that $a \mapsto p(a, \mbf {b})$ (i.e., the corresponding polynomial function).   Then, recalling that $\mbf L_1$ is a complete lattice, and recalling also the choice of $p(x, \mbf {b})$, the following two things happen:
$$\mbf L_1 \models \forall \mbf {b} \exists x (p(x, \mbf {b}) = x) \;\;\; \text{ and } \;\;\; \mbf L_0 \not\models \forall \mbf {b} \exists x (p(x, \mbf {b}) = x).$$
Indeed, let $A_1 = \{ a \in L_1 : a \leq p(a, \mbf b) \}$ and $a_1 = \Join A_1$.
It is straightforward that:
$$p(a_1,\mbf b) = a_1.$$
This last equality is the Tarski fixed-point theorem; see \cite{Knaster, Tarski} 
and the discussion in Section~I.5 of \cite{the_book}.
\end{proof}



\begin{theorem} \label{62no}
Let $3 \leq n < \omega$. Then $\mbf H_n$ is not elementarily equivalent to $\mbf F_n$, in fact there is a positive $\forall\exists$-sentence true in $\mbf H_n$ and false in $\mbf F_n$.
\end{theorem}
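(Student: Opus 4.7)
The plan is to apply Lemma~\ref{lemma:claim1} with $\mbf L_0 = \mbf F_n$ and $\mbf L_1 = \mbf H_n$. By Theorem~\ref{lemma:claim2}, $\mbf H_n$ is isomorphic to $\mrm{DM}(\mbf F_n)$, hence is a complete lattice, and $\mbf F_n$ embeds canonically into $\mbf H_n$ via Fact~\ref{bounded_fact}. So the only remaining task is to exhibit a lattice polynomial $p(x,\mbf b)$, monotone in $x$ and with parameters $\mbf b$ drawn from $\mbf F_n$, such that $p(\cdot,\mbf b) : \mbf F_n \to \mbf F_n$ admits no fixed point; the desired positive $\forall\exists$-sentence will then be $\forall \mbf b\,\exists x\, (p(x,\mbf b) = x)$, which holds in the complete lattice $\mbf H_n$ by the Knaster--Tarski theorem (as already exploited inside the proof of Lemma~\ref{lemma:claim1}) and fails in $\mbf F_n$ for the chosen parameters.

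For the polynomial, I would fix three distinct generators $a,b,c$ of $\mbf F_n$ (possible because $n \geq 3$) and take an ``alternating ladder''
\[
p(t) \;=\; a \wedge \bigl(b \vee \bigl(c \wedge \bigl(a \vee \bigl(b \wedge (c \vee t)\bigr)\bigr)\bigr)\bigr),
\]
which is manifestly monotone in $t$. Starting from the bottom element $0 = x_1 \wedge \cdots \wedge x_n$ of $\mbf F_n$, the iterates $0 \leq p(0) \leq p^2(0) \leq \cdots$ form an ascending chain. Using Whitman's condition $(\mathrm W)$ together with the double primeness of the generators (Fact~\ref{doubly_prime_fact}), no absorption of the form $a \wedge (a \vee \cdot) = a$ or its duals can flatten the successive layers of $p$, so the Whitman canonical form of $p^{k+1}(0)$ strictly extends that of $p^k(0)$ and the chain is strictly ascending.

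The crux of the argument will be showing that \emph{no} element of $\mbf F_n$ --- not merely the Knaster--Tarski least fixed point, which lives in $\mbf H_n \setminus \mbf F_n$ --- is fixed by $p$. Suppose for contradiction that $s \in \mbf F_n$ satisfies $s = p(s)$; in particular $s \leq a$. Using the word-problem algorithm of \cite[Chapter~I]{the_book}, one unfolds the equation $s = a \wedge (b \vee (c \wedge (a \vee (b \wedge (c \vee s)))))$ and verifies that each potential simplification fails: for instance, the candidate absorption $a \wedge (a \vee \cdot) = a$ would require $a \leq b \vee (c \wedge (a \vee \cdots))$, which by join-primeness of $a$ forces $a \leq b$ or (recursively) $a \leq c$, both false; dual arguments block the meet-side collapses. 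Consequently the Whitman canonical form of $p(s)$ strictly extends that of $s$, contradicting uniqueness of canonical forms in $\mbf F_n$. The main obstacle is this careful case analysis in canonical forms; once it is carried out, Lemma~\ref{lemma:claim1} immediately produces the positive $\forall\exists$-sentence separating $\mbf H_n$ from $\mbf F_n$.
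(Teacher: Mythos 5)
Your plan is exactly the paper's proof: Theorem~\ref{62no} is obtained by combining Theorem~\ref{lemma:claim2} (so that $\mbf H_n \cong \op{DM}(\mbf F_n)$ is a complete lattice containing $\mbf F_n$) with Lemma~\ref{lemma:claim1} and the existence of a fixed-point-free monotone polynomial on $\mbf F_n$. The only difference is that the paper simply cites \cite[Section~I.5]{the_book} for such a polynomial rather than constructing and verifying one, so the canonical-form case analysis you flag as the remaining crux is precisely the content of that citation and does not need to be redone.
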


\begin{proof} This follows immediately from Lemmas \ref{lemma:claim2} and \ref{lemma:claim1} and the existence of fixed-point free polynomials in $\mbf F_n$, see e.g. \cite[Section~I.5.]{the_book}.
\end{proof}

\section{Proof of Theorem \ref{the_last_theorem}}

\newcommand{\up}{u}  
\newcommand{\dn}{\ell} 


The ideals of a lattice $\mbf L$, ordered by set inclusion, form a complete lattice $\op{Id}(\mbf L)$.  
Likewise the filters, ordered by set inclusion, form a complete lattice $\op{Fil}(\mbf L)$.
(If $\mbf L$ has a least element $0$, we normally take $\{ 0 \}$ to be the least ideal;
if not then the empty set is contained in $\op{Id}(\mbf L)$.  Dually for $\op{Fil}(\mbf L)$.
The filter lattice is often ordered by \emph{reverse set inclusion}; for current purposes,
set inclusion is more natural.  That makes the join of two filters $F \join G$ to be the filter
generated by $F \cup G$, directly analogous to the join of two ideals in $\op{Id}(\mbf L)$. Recall that 
$\op{DM}(\mbf F_n) \cong \mbf H_n$ by Theorem~\ref{lemma:claim2}, 
so in what follows we identify the two objects.


\begin{theorem} \label{theorem:bh}
Let $\kappa : \op{Id}(\mbf F_n) \to \op{DM}(\mbf F_n)$ via $\kappa(I) = I^{\up\dn}$. Then the map $\kappa$ witnesses that $\mbf H_n$ is a retract of the ideal lattice $\op{Id}(\mbf F_n)$.
\end{theorem}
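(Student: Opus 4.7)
The plan is to show that $\kappa$ is a surjective lattice homomorphism and that the natural set-theoretic inclusion $\iota : \op{DM}(\mbf F_n) \hookrightarrow \op{Id}(\mbf F_n)$ is itself a lattice homomorphism with $\kappa \circ \iota = \mathrm{id}$; together these are precisely the retract property. The inclusion $\iota$ is well defined because any $\op{DM}$-closed subset $A$ of $\mbf F_n$ is automatically downward closed (if $y \leq x \in A^{u\ell}$ then $y$ lies below every upper bound of $A$, so $y \in A^{u\ell}=A$) and closed under existing finite joins (any upper bound of $A$ dominates $x\vee y$ for $x,y \in A$), hence is an ideal; the identity $\kappa \circ \iota = \mathrm{id}$ is immediate from $A^{u\ell}=A$ for such $A$.

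Join-preservation for $\kappa$ is essentially formal: since $I\cup J$ and the ideal join $I \vee_{\op{Id}} J$ share the same set of upper bounds, $(I \vee_{\op{Id}} J)^{u\ell} = (I\cup J)^{u\ell} = (I^{u\ell} \cup J^{u\ell})^{u\ell}$ by general properties of the Galois closure $(\cdot)^{u\ell}$. The substantive step is meet-preservation, $(I \cap J)^{u\ell} = I^{u\ell} \cap J^{u\ell}$; the inclusion $\subseteq$ is immediate since the right-hand side is a $\op{DM}$-closed ideal containing $I \cap J$, and for $\supseteq$ both sides are $\op{DM}$-closed ideals, so by Lemma~\ref{lemma:inter2} it is enough to verify they contain the same $\op{CJI}$ elements. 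This reduces to the following \emph{key lemma}: for every ideal $I \subseteq \mbf F_n$ and every $w \in \op{CJI}(\mbf F_n)$, $w \in I^{u\ell}$ if and only if $w \in I$.

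Granting the key lemma, meet-preservation for $\kappa$ is immediate, as every $\op{CJI}$ element $w$ in $I^{u\ell} \cap J^{u\ell}$ already lies in $I \cap J \subseteq (I \cap J)^{u\ell}$. For join-preservation of $\iota$, i.e.~that the ideal join of two $\op{DM}$-closed sets is $\op{DM}$-closed, take $x \in (A \vee_{\op{Id}} B)^{u\ell}$; every $\op{CJI}$ below $x$ belongs to $A \vee_{\op{Id}} B$ by the key lemma, and using the sequence $b_j := \Join\!\big((A \vee_{\op{Id}} B) \cap \op{CJI}_j\big)$ (which lies in $A \vee_{\op{Id}} B$ because $\op{CJI}_j$ is finite, as in Theorem~\ref{lemma:claim2} and Fact~\ref{observation_beta_n}) together with Day's join-density of the $\op{CJI}$'s, one recovers an explicit pair $a \in A$, $b \in B$ with $x \leq a \vee b$, so $x \in A \vee_{\op{Id}} B$. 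Meets under $\iota$ are set-theoretic intersections in both lattices and are preserved trivially.

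The main obstacle is the nontrivial direction $w \in I^{u\ell} \Rightarrow w \in I$ of the key lemma. The strategy is to exploit that, since $w$ is $\op{CJI}$, Day's congruence $\varphi_w$ is both lower and upper bounded, so $\mbf F_n/\varphi_w$ is a finite bounded lattice in which $[w]$ covers $[w_*]$. Supposing $w \notin I$, one forms $[u] := \bigvee_{\mbf F_n/\varphi_w}[I]$ and sets $u := \alpha_{\varphi_w}([u])$, the largest representative of this class in $\mbf F_n$. Then $u$ is an upper bound of $I$ in $\mbf F_n$, since for each $i \in I$ the equality $[i \vee u] = [u]$ combined with $u = \alpha([u])$ forces $i \vee u \leq u$, i.e.~$i \leq u$. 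The crux is to show $[u] \not\geq [w]$; this is where Whitman's condition $\mathrm{(W)}$ enters, ruling out that any finite join from $[I]$ reaches the cover $[w]$ above $[w_*]$ in the finite quotient when $w \notin I$. The resulting $u$ would then be an upper bound of $I$ with $u \not\geq w$, contradicting $w \in I^{u\ell}$. This Whitman-based analysis in $\mbf F_n/\varphi_w$ is where the bulk of the technical work is concentrated.
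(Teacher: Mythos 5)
Your decomposition of the retraction into four preservation statements is sound, and two of the four are handled correctly: the Galois-formal join computation for $\kappa$, and the reduction of meet-preservation of $\kappa$ to your key lemma via Lemma~\ref{lemma:inter2}, both work. Indeed the key lemma is true, and its proof is easier than you suggest: since $I$ is an ideal, $h(I)$ is directed in the finite quotient $\mbf F_n/\varphi_w$, so $\bigvee h(I)=h(i)$ for a single $i\in I$, and $w=\beta_{\varphi_w}(w)$ then gives $w\leq i$ whenever $[w]\leq[i]$ --- no appeal to $\mathrm{(W)}$ is needed there. This part is a genuinely different and rather clean route to $\kappa(I\meet J)=\kappa(I)\meet\kappa(J)$, compared with the paper's filter calculus.

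The gap is in join-preservation of $\iota$, i.e., the claim that $A\join_{\op{Id}}B$ is $\op{DM}$-closed for $\op{DM}$-closed ideals $A,B$ --- equivalently, that $\op{DM}(\mbf F_n)$ is a sublattice of $\op{Id}(\mbf F_n)$, which is exactly Lemma~\ref{lemma:calc}(i) and is where the paper concentrates its technical work. Your key lemma only yields that $(A\join_{\op{Id}}B)^{u\ell}$ and $A\join_{\op{Id}}B$ contain the same $\op{CJI}$ elements; Lemma~\ref{lemma:inter2} cannot then be invoked, because it requires \emph{both} ideals to be $\op{DM}$-closed, and the $\op{DM}$-closedness of $A\join_{\op{Id}}B$ is precisely what is at stake. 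Concretely, for $x\in(A\join_{\op{Id}}B)^{u\ell}$ your elements $b_j$ give only pairs $a_j\in A$, $b_j'\in B$ with $b_j\leq a_j\join b_j'$, and there is no reason for the suprema of these sequences to exist in $\mbf F_n$ (the ideals $A$, $B$ need not be principal), so no single pair $a\in A$, $b\in B$ with $x\leq a\join b$ is actually ``recovered''. An ideal is closed only under finite joins, so join-density of the $\op{CJI}$'s does not transfer membership from the $\op{CJI}$'s below $x$ to $x$ itself; this is the same phenomenon that makes your key lemma false for join irreducibles that are not completely join irreducible. The paper closes this gap with Lemma~\ref{lemma:calc}(i), namely $F^{\ell}\join G^{\ell}=(F\cap G)^{\ell}$ for filters $F,G$, proved by showing that the set of elements witnessing the inclusion is a sublattice containing the join-prime generators, with $\mathrm{(W)}$ entering in the meet step; some argument of this kind is still needed to complete your proof.
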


Our path to Theorem~\ref{theorem:bh} includes a more general result, i.e., our Theorem~\ref{the_last_theorem}. To this extent, let us start by recalling some basic facts.

\begin{lemma} \label{lemma:std}
Let\/ $\mbf L$ be a lattice. Then:
\begin{enumerate}[(1)]
\item  For any $D \subseteq L$, $D^\up \in \op{Fil}(\mbf L)$.
\item An ideal $I$ of\/ $\mbf L$ is $\op{DM}$-closed, i.e., $\kappa(I)=I$, if and only if\/
$I = F^\dn$ for some filter $F$ of\/ $\mbf L$, where $\kappa : \op{Id}(\mbf L) \to \op{DM}(\mbf L)$ is defined as $\kappa(I) = I^{\up\dn}$.
\end{enumerate}
\end{lemma}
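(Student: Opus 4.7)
The plan is to treat both statements as direct consequences of the Galois connection between $(\mathcal{P}(L),\subseteq)$ and itself given by the order-reversing maps $X \mapsto X^{\up}$ and $X \mapsto X^{\dn}$. The only facts I will need are the standard identities that follow formally from such a connection, namely $X^{\up\dn\up}=X^{\up}$ and $X^{\dn\up\dn}=X^{\dn}$ for every $X \subseteq L$, together with the trivial observation that the image of $(\cdot)^{\up}$ consists of upward-closed sets and the image of $(\cdot)^{\dn}$ consists of downward-closed sets.

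For part~(1), I will argue directly. Let $D \subseteq L$. To see that $D^{\up}$ is upward-closed, suppose $p \in D^{\up}$ and $p \leq q$; then for every $d \in D$ we have $d \leq p \leq q$, so $q \in D^{\up}$. To see closure under binary meets, suppose $p,q \in D^{\up}$; then for every $d \in D$ we have $d \leq p$ and $d \leq q$, so $d \leq p \meet q$, whence $p \meet q \in D^{\up}$. The edge case $D=\emptyset$ gives $D^{\up}=L$, which is a filter under the conventions fixed just before the lemma. No obstacle here.

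For part~(2), I will prove both directions using the Galois identity. If $I = F^{\dn}$ for some filter $F$, then
\[
\kappa(I) \;=\; I^{\up\dn} \;=\; (F^{\dn})^{\up\dn} \;=\; F^{\dn\up\dn} \;=\; F^{\dn} \;=\; I,
\]
so $I$ is $\op{DM}$-closed. Conversely, assume $\kappa(I)=I$, i.e.\ $I^{\up\dn}=I$. Set $F := I^{\up}$; by part~(1), $F$ is a filter of $\mbf L$, and
\[
F^{\dn} \;=\; (I^{\up})^{\dn} \;=\; I^{\up\dn} \;=\; I,
\]
which exhibits $I$ in the desired form. The only thing worth double-checking along the way is that $\kappa$ indeed lands in $\op{DM}(\mbf L)$, which comes for free: $(I^{\up\dn})^{\up\dn} = I^{\up\dn\up\dn} = I^{\up\dn}$ by the same identity. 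There is no real obstacle in this argument; the main work is just to pick the right filter $F=I^{\up}$ in the reverse direction, and the formal identities do the rest.
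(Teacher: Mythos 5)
Your proof is correct; the paper states Lemma~\ref{lemma:std} without proof, treating it as a standard fact, and your argument via the Galois connection identities $X^{\up\dn\up}=X^{\up}$ and $X^{\dn\up\dn}=X^{\dn}$ (together with the direct verification that $D^{\up}$ is upward closed and meet-closed, and the choice $F=I^{\up}$ in the converse direction) is exactly the canonical argument being left implicit. Nothing is missing.
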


\begin{lemma} \label{lemma:calc}
Let\/ $\mbf K$ be a lattice satisfying the following assumptions:
\begin{enumerate}[(1)]
   \item $\mbf K$ satisfies $\mathrm{(W)}$;
   \item $\mbf K$ is generated by a set $X$ of join prime elements.
\end{enumerate}
Let $F$, $G$ be filters of \/ $\mbf K$.  Then:
\begin{enumerate}[(i)]
\item  $F^\dn \join G^\dn = (F \cap G)^\dn$;
\item $F^\dn \cap G^\dn = (F \join G)^\dn$.  
\end{enumerate}
Hence $\op{DM}(\mbf K)$ is a sublattice of\/ $\op{Id}(\mbf K)$.
\end{lemma}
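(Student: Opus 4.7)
The plan is to prove (i) and (ii) separately and then observe that together they say exactly that the $\op{DM}$-closed ideals are closed under the ambient meet and join of $\op{Id}(\mbf K)$, hence form a sublattice. Part (ii) I expect to hold in any lattice whatsoever, with no appeal to $\mathrm{(W)}$ or join-primality: using the standard description $F \join G = \{h : h \geq f \meet g \text{ for some } f \in F, g \in G\}$ (filters are meet-closed and upward-closed), $p \in (F \join G)^{\dn}$ iff $p \leq f \meet g$ for all $(f,g)$, iff $p$ is simultaneously a lower bound of $F$ and of $G$. The easy half $F^{\dn} \join G^{\dn} \subseteq (F \cap G)^{\dn}$ of (i) is also formal, since $F \cap G \subseteq F$ and $F \cap G \subseteq G$ place both $F^{\dn}$ and $G^{\dn}$ inside the ideal $(F \cap G)^{\dn}$.

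The content of (i) is the reverse inclusion $(F \cap G)^{\dn} \subseteq F^{\dn} \join G^{\dn}$. A short verification (using that $f \join g$ itself lies in $F \cap G$) rewrites $(F \cap G)^{\dn} = \{p : p \leq f \join g \text{ for all } f \in F, g \in G\}$, so, given such a $p$, the task is to produce $a \in F^{\dn}$ and $b \in G^{\dn}$ with $p \leq a \join b$. I plan to induct on the complexity of a lattice term $t(\bar{x})$ in the generating set $X$ representing $p$. The base case $p \in X$ uses join-primality: if witnesses $f_\star \in F$, $g_\star \in G$ existed with $p \not\leq f_\star$ and $p \not\leq g_\star$, then $p \leq f_\star \join g_\star$ would contradict join-primality, so either $p \leq f$ for all $f \in F$ or $p \leq g$ for all $g \in G$; either way $p \in F^{\dn} \cup G^{\dn}$. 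The join inductive step $p = p_1 \join p_2$ is immediate: each $p_i \leq p$ lies in $(F \cap G)^{\dn}$, hence by induction in the ideal $F^{\dn} \join G^{\dn}$, which absorbs their join.

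The main obstacle is the meet inductive step $p = p_1 \meet p_2$, where $\mathrm{(W)}$ enters essentially. Condition $\mathrm{(W)}$ splits each instance of $p \leq f \join g$ into four cases: $p_1 \leq f \join g$, $p_2 \leq f \join g$, $p \leq f$, or $p \leq g$. Setting $F_0 = \{f \in F : p \leq f\}$ and $G_0 = \{g \in G : p \leq g\}$ (sub-filters of $F$ and $G$), the cases $F_0 = F$ and $G_0 = G$ yield $p \in F^{\dn}$ or $p \in G^{\dn}$ outright. Otherwise, on $(F \setminus F_0) \times (G \setminus G_0)$ only the first two options of $\mathrm{(W)}$ can occur, and I would argue that a single index $i \in \{1,2\}$ serves uniformly: a pair of alleged disagreeing witnesses $(f_1,g_1)$ and $(f_2,g_2)$ is refuted by passing to the meet pair $(f_1 \meet f_2, g_1 \meet g_2)$, which still lies in $(F \setminus F_0) \times (G \setminus G_0)$ (since $p \leq f_1 \meet f_2$ would force $p \leq f_1$, contradicting $f_1 \notin F_0$, and dually for $g$) and collapses the two options into one. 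Once such an $i$ is fixed, the same meet-refinement trick with auxiliary witnesses $f_\star \in F \setminus F_0$ and $g_\star \in G \setminus G_0$ extends $p_i \leq f \join g$ from $(F \setminus F_0) \times (G \setminus G_0)$ to all of $F \times G$. Thus $p_i \in (F \cap G)^{\dn}$, and the induction hypothesis applied to the strictly simpler subterm $t_i$ places $p_i$ in $F^{\dn} \join G^{\dn}$; since $p \leq p_i$ and $F^{\dn} \join G^{\dn}$ is a downward-closed ideal, $p \in F^{\dn} \join G^{\dn}$ as required.

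Combining (i) and (ii), the set of $\op{DM}$-closed ideals is closed under the ambient meet and join of $\op{Id}(\mbf K)$, so the image of the surjection $\kappa : \op{Id}(\mbf K) \to \op{DM}(\mbf K)$ from Lemma~\ref{lemma:std} is a sublattice of $\op{Id}(\mbf K)$, as claimed.
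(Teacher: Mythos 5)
Your proof is correct, and its skeleton coincides with the paper's: part (ii) and the easy inclusion of (i) are formal, and the hard inclusion $(F \cap G)^{\ell} \subseteq F^{\ell} \join G^{\ell}$ is proved by structural induction over terms in the generators (equivalently, by showing that the set of elements with the desired property is a sublattice containing $X$), with join-primality handling the generators and downward/join closure of ideals handling the join step. The two arguments diverge only in the meet step, which is the crux. The paper argues contrapositively and applies the induction hypothesis to \emph{both} meetands: from $w = w_1 \meet w_2 \notin F^{\ell} \join G^{\ell}$ it extracts witnesses $h_1, h_2 \in F \cap G$ with $w_i \nleq h_i$ and $f_1 \in F$, $g_1 \in G$ with $w \nleq f_1$, $w \nleq g_1$, and then a single application of $\mathrm{(W)}$ to the element $(f_1 \meet h_1 \meet h_2) \join (g_1 \meet h_1 \meet h_2) \in F \cap G$ finishes. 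You instead argue forward: applying $\mathrm{(W)}$ to every pair $(f,g)$ with $p \nleq f$ and $p \nleq g$, and using the meet-refinement trick to force a uniform index, you show that one meetand $p_i$ already lies in $(F \cap G)^{\ell}$, and then apply the induction hypothesis only to that $p_i$. The two meet steps are essentially contrapositives of the same assertion, namely that if $p_1 \meet p_2 \in (F\cap G)^{\ell}$ and $p_1\meet p_2 \notin F^{\ell} \cup G^{\ell}$ then some $p_i \in (F\cap G)^{\ell}$; the paper's version is shorter (one instance of $\mathrm{(W)}$ applied to a cleverly built element), while yours isolates that statement explicitly and pays for it with the uniformization argument over all pairs. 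Both are complete; I see no gap.
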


\begin{proof}
Part (ii) is general nonsense.  The other direction being trivial,
we need that $F^\dn \cap G^\dn \subseteq (F \join G)^\dn$.
If $w \in F^\dn \cap G^\dn$, then $w \leq f$ for all $f \in F$,
and $w \leq g$ for all $g \in G$.  Therefore $w \leq f \meet g$
for any $f \in F$, $g \in G$, and these are the generators for
the filter generated by $F \cup G$. 

\smallskip \noindent For part (i) we need to show the nontrivial direction 
$F^\dn \join G^\dn \supseteq (F \cap G)^\dn$.
Following a standard method of proof and using the contrapositive inclusion, let:
\[ \mbf S = \{ w \in K : \text{ for all } F,G \in \op{Fil}(\mbf K) : w \notin F^\dn \join G^\dn \text{ implies }w \notin (F \cap G)^\dn  \} .  \]
We will prove that $\mbf S$ is a sublattice of $\mbf K$ containing the generating set $X$,
which means $\mbf S = \mbf K$ and the statement is true.

\smallskip \noindent If $w \in X$ and $w \notin F^\dn \join G^\dn$, then in particular $w \notin F^\dn \cup G^\dn$.
Then there exist $f_1 \in F$ and $g_1 \in G$ such that $w \nleq f_1$ and $w \nleq g_1$.
As $w$ is join prime, $w \nleq f_1 \join g_1$ which is in $F \cap G$.
Hence $w \notin (F \cap G)^\dn$.  Therefore $X \subseteq S$.

\smallskip \noindent If $w = w_1 \join w_2$ with $w_1$, $w_2 \in S$ and
$w \notin  F^\dn \join G^\dn$, then one join, say $w_1$, is such that
$w_1 \notin F^\dn \join G^\dn$.   Since $w_1 \in S$ we have $w_1 \notin (F \cap G)^\dn$, which is an ideal.
Hence $w_1 \join w_2 \notin (F \cap G)^\dn$, and so $\mbf S$ is closed under joins.

\smallskip \noindent Now assume $w = w_1 \meet w_2$ with $w_1$, $w_2 \in S$ and $w \notin F^\dn \join G^\dn$.   
Then $w_1 \notin  F^\dn \join G^\dn$ so as $w_1 \in S$ we get $w_1 \notin (F \cap G)^\dn$, 
whence there exists $h_1 \in F \cap G$ with $w_1 \nleq h_1$.   Likewise there exists $h_2 \in F \cap G$
such that $w_2 \nleq h_2$.   
Also $w \notin F^\dn$ whence $w \nleq f_1$ for some $f_1 \in F$.
Similarly $w \nleq g_1$ for some $g_1 \in G$.

\smallskip \noindent Now we claim that by $\mathrm{(W)}$, we have
\[  w_1 \meet w_2 = w \nleq (f_1 \meet h_1 \meet h_2) \join (g_1 \meet h_1 \meet h_2) \in F \cap G .\]
Note that $f_1 \meet h_1 \meet h_2 \in F$ and $g_1 \meet h_1 \meet h_2 \in G$, so their join is in $F \cap G$.
Thus $w \notin (F \cap G)^\dn$, as desired, and so $\mbf S$ is closed under meets.

\smallskip \noindent In view of Lemma~\ref{lemma:std}, (i) and (ii) show that the join and meet
of two $\op{DM}$-closed ideals is another $\op{DM}$-closed ideal.
Thus $\op{DM}(\mbf K)$ is a sublattice of\/ $\op{Id}(\mbf K)$.
\end{proof}

A classic result of K. Baker and A. Hales \cite{baker} deals with the connection
between a lattice and its ideal lattice:

\begin{lemma} \label{corollary:bhp} Let $\mbf L$ be a lattice.  Then
\begin{enumerate}[(1)]
\item  $\op{Id}(\mbf L) \in \mbb{HSU}(\mbf L)$;
\item  $\op{Id}(\mbf L)$ and $\mbf L$ share the same universal positive theory;
\item  if\/ $\mbf L$ satisfies $\mathrm{(W)}$, then $\op{Id}(\mbf L)$ satisfies $\mathrm{(W)}$.
\end{enumerate}
\end{lemma}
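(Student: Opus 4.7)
My plan is to handle the three parts in order, relying on the Baker--Hales reference \cite{baker} for part (1) and giving direct arguments for (2) and (3).

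For (1), this is the classical theorem of Baker and Hales in \cite{baker}, which establishes $\op{Id}(\mbf L) \in \mathbb H \mathbb S \mathbb U(\mbf L)$ via an ultrapower construction. One indexes an ultrapower $\mbf L^D/\mathcal U$ by the directed poset $D$ of finite subsets of $L$, with $\mathcal U$ extending the tail filter $\{\{G \in D : F \subseteq G\} : F \in D\}$, and, for each ideal $I$ of\/ $\mbf L$, represents $I$ by an appropriate class $[\phi_I]_{\mathcal U}$ built from the joins $\phi_I(F) = \bigvee(F \cap I)$. The subalgebra $\mbf A$ of the ultrapower generated by these representatives then admits a surjective homomorphism onto $\op{Id}(\mbf L)$ sending each $[\phi_I]_{\mathcal U}$ back to $I$.

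For (2), recall that positive universal sentences are preserved downward by embeddings, upward by surjective homomorphisms, and in both directions by ultraproducts (by the \L{}o\'{s} theorem); hence they are preserved by the operator $\mathbb H \mathbb S \mathbb U$. Combined with (1), this gives $\op{PTh}_\forall(\mbf L) \subseteq \op{PTh}_\forall(\op{Id}(\mbf L))$. The reverse inclusion is immediate from the principal-ideal embedding $\mbf L \hookrightarrow \op{Id}(\mbf L)$, $a \mapsto \downarrow\! a$, so the two positive universal theories coincide.

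For (3), I would argue by contradiction. Suppose $S, T, U, V \in \op{Id}(\mbf L)$ satisfy $S \cap T \subseteq U \vee V$ but none of the four conclusions of (W) holds in $\op{Id}(\mbf L)$: pick $s_0 \in S \setminus (U \vee V)$, $t_0 \in T \setminus (U \vee V)$, $x \in (S \cap T) \setminus U$ and $y \in (S \cap T) \setminus V$, and set $s := s_0 \vee x \vee y \in S$ and $t := t_0 \vee x \vee y \in T$. Then $s \wedge t \in S \cap T \subseteq U \vee V$, so $s \wedge t \leq u \vee v$ for some $u \in U$, $v \in V$. Applying (W) in $\mbf L$ to the quadruple $(s, t, u, v)$ yields four cases, each leading to a contradiction: $s \leq u \vee v$ forces $s_0 \leq s \leq u \vee v$, hence $s_0 \in U \vee V$; the case $t \leq u \vee v$ is symmetric; $s \wedge t \leq u$ yields $x \leq s \wedge t \leq u$, whence $x \in U$; and dually $s \wedge t \leq v$ gives $y \in V$. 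Hence (W) must already hold in $\op{Id}(\mbf L)$. The main obstacle in the whole argument is the ultrapower machinery of (1), for which I defer to \cite{baker}; parts (2) and (3) are then a routine preservation argument and a short direct computation, respectively.
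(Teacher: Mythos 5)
Your proposal is correct; the paper itself gives no proof of this lemma, simply citing it as a classic result of Baker and Hales, and your reconstruction (deferring the ultrapower construction for (1) to the same reference, the standard $\mathbb{HSU}$-preservation argument for (2), and the direct (W)-transfer computation for (3) with witnesses $s = s_0 \vee x \vee y$, $t = t_0 \vee x \vee y$) is accurate and fills in exactly the routine details the paper omits. The only point worth a passing remark is the degenerate case where one of the ideals $U$, $V$, or the intersection $S \cap T$ is empty, which is handled trivially and does not affect the argument.
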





Lemma~\ref{lemma:calc} and \ref{corollary:bhp} already give the first part of our Theorem~\ref{the_last_theorem}. Thus, to establish the rest of Theorem~\ref{the_last_theorem}, we are only left to show Theorem~\ref{theorem:bh}, i.e., that $\kappa: \op{Id}(\mbf F_n) \to \op{DM}(\mbf F_n)$ is a homomorphism,
which obviously fixes $\op{DM}$-closed sets.  For this we need Lemma~\ref{lemma:calc} and the corresponding statements for 
$I^\up$ and $J^\up$ where $I$ and $J$ are ideals, with $u$ and $\ell$ interchanged.
Both versions apply to free lattices because the elements in the generating set are both join and meet prime.
To this extent, we calculate:
\begin{align*}
\kappa(I \join J) &= (I \join J)^{\up\dn} \qquad  \text{by definition}\\
                           &= (I^\up \cap J^\up)^\dn \qquad \text{by (ii) dual} \\
                           &= I^{\up\dn} \join J^{\up\dn} \qquad\    \text{by (i)} \\
                           &= \kappa(I) \join \kappa(J)\quad\,  \text{by definition,}
\end{align*}
\begin{align*}
\kappa(I \meet J) &= (I \cap J)^{\up\dn} \qquad\quad  \text{by definition}\\
                           &= (I^\up \join J^\up)^\dn \quad \text{by (i) dual} \\
                           &= I^{\up\dn} \cap J^{\up\dn} \qquad\quad    \text{by (ii)} \\
                           &= \kappa(I) \meet \kappa(J)\qquad\!  \text{by definition.}
\end{align*}

    We denote by $\op{SD}_\join$ semidistributivity with respect to $\vee$ and similarly for $\wedge$.

\begin{corollary} \label{cor:props}
  The ideal lattice $\op{Id}(\mbf F_n)$ and the profinite-bounded completion $\op{DM}(\mbf F_n) \cong \mbf H_n$ have the following properties:
  \begin{enumerate}[(1)]
      \item $\op{Id}(\mbf F_n)$ satisfies $\mathrm{(W)}$ and\/ $\op{SD}_\join$, but fails $\op{SD}_\meet$ for $n \geq 3$;
      \item $\mbf H_n$ satisfies $\mathrm{(W)}$ and both semidistributive laws.  
  \end{enumerate}
\end{corollary}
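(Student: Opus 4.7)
The proof splits naturally into the two claims, and my plan is to address them in turn, beginning with part~(2), which is the cleaner of the two.

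For part~(2), the inverse-limit description of $\mbf H_n$ from Fact~\ref{bounded_fact} does all the work. Each $\mbf B_{(n,k)}$ is a finite lattice that is both lower and upper bounded, and it is a classical fact (see e.g.\ the blue book) that every finite lower-bounded lattice satisfies $\op{SD}_\join$, every finite upper-bounded lattice satisfies $\op{SD}_\meet$, and every finite bounded lattice satisfies Whitman's condition $\mathrm{(W)}$. Each of these three conditions is a universal Horn sentence (a quasi-identity), hence preserved under direct products and sublattices, and therefore under inverse limits. Since $\mbf H_n$ is tautologically a sublattice of $\prod_{k < \omega} \mbf B_{(n,k)}$, it inherits $\mathrm{(W)}$, $\op{SD}_\join$ and $\op{SD}_\meet$, yielding part~(2).

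For part~(1), the assertion that $\op{Id}(\mbf F_n)$ satisfies $\mathrm{(W)}$ is an immediate application of Lemma~\ref{corollary:bhp}(3) to $\mbf L = \mbf F_n$. For $\op{SD}_\join$, the plan is to prove the general lifting: if $\mbf L$ is $\op{SD}_\join$, then so is $\op{Id}(\mbf L)$. Given $A \join B = A \join C$ in $\op{Id}(\mbf L)$ and $x \leq a \join b$ with $a \in A$, $b \in B$, unpack $b \in A \join C$ into $b \leq a_1 \join c_1$ and $c_1 \in A \join B$ into $c_1 \leq a_2 \join b_1$, iterate as necessary to stabilize, and consolidate into elements $\alpha \in A$, $b^* \in B$, $c^* \in C$ with $\alpha \join b^* = \alpha \join c^*$; then $\op{SD}_\join$ in $\mbf F_n$ gives $\alpha \join b^* = \alpha \join (b^* \meet c^*)$, and since $b^* \meet c^* \in B \cap C$, we conclude $x \in A \join (B \cap C)$.

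The main obstacle, in my view, is the failure of $\op{SD}_\meet$ in $\op{Id}(\mbf F_n)$ for $n \geq 3$. Since the principal ideals form a sublattice of $\op{Id}(\mbf F_n)$ isomorphic to $\mbf F_n$, and $\mbf F_n$ itself satisfies $\op{SD}_\meet$, any witness must involve non-principal ideals. My plan is to exploit the existence, for $n \geq 3$, of join-irreducible elements $w \in \mbf F_n$ that are not completely join-irreducible: for such $w$ the set $I_w = \{u \in \mbf F_n : u < w\}$ is an ideal (by join-irreducibility, $u_1, u_2 < w$ forces $u_1 \join u_2 < w$) but is not principal. Using such an $I_w$, I would construct three ideals $A$, $B$, $C$ with $A \cap B = A \cap C$ but $A \cap (B \join C) \supsetneq A \cap B$, the extra elements of $A \cap (B \join C)$ arising precisely because the ideal-join $B \join C$ creates new joins $b \join c$ (with $b \in B$, $c \in C$) that may lie in $A$ even though no element of $B$ or of $C$ alone does. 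Finding the explicit triple and verifying the strict containment is the technical crux.
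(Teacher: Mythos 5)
Your proposal contains several genuine gaps, the most serious of which concerns Whitman's condition for $\mbf H_n$. You treat $\mathrm{(W)}$ as ``a universal Horn sentence (a quasi-identity), hence preserved under direct products.'' It is not: its conclusion is a four-fold disjunction, so it is universal but not Horn, and it is in fact \emph{not} preserved under products. For example $\mbf F_2 \times \mbf F_2 \cong \mbf 2^4$ fails $\mathrm{(W)}$ at $e=(1,1,0,0)$, which is simultaneously a proper join of $(1,0,0,0)$ and $(0,1,0,0)$ and a proper meet of $(1,1,1,0)$ and $(1,1,0,1)$; likewise $\op{FD}_3$ is a finite bounded lattice failing $\mathrm{(W)}$ at its median element, so ``every finite bounded lattice satisfies $\mathrm{(W)}$'' is also false, and it is not clear that the $\mbf B_{(n,k)}$ themselves satisfy $\mathrm{(W)}$. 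Thus the inverse-limit argument does not deliver $\mathrm{(W)}$ for $\mbf H_n$. The paper instead obtains it from the fact that $\op{DM}(\mbf F_n)\cong\mbf H_n$ is a \emph{sublattice} of $\op{Id}(\mbf F_n)$ (Lemma~\ref{lemma:calc}) together with Lemma~\ref{corollary:bhp}(3), using only that $\mathrm{(W)}$ is universal. Your inverse-limit argument does work for $\op{SD}_\join$ and $\op{SD}_\meet$ of $\mbf H_n$, since those genuinely are quasi-identities holding in every (finite) lower, resp.\ upper, bounded lattice; that part is a legitimate alternative to the paper's route (retract of $\op{Id}(\mbf F_n)$ via Theorem~\ref{theorem:bh} plus self-duality of the construction) and has the merit of not depending on Wehrung's theorem.

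In part (1) there are two further gaps. First, the claim that $\op{SD}_\join$ lifts from an arbitrary lattice $\mbf L$ to $\op{Id}(\mbf L)$ is precisely the kind of statement that fails for ideal lattices: your own part (1) shows that $\op{SD}_\meet$ does \emph{not} lift from $\mbf F_n$ to $\op{Id}(\mbf F_n)$, and the step ``iterate as necessary to stabilize, and consolidate into elements $\alpha, b^*, c^*$ with $\alpha\join b^*=\alpha\join c^*$'' is exactly where the argument breaks down --- the back-and-forth between $A\join B$ and $A\join C$ produces an unbounded spiral that need not terminate, and an equality of ideal joins never reduces to an equality of element joins. The paper does not attempt this; it invokes Wehrung's Corollary~5.4 of \cite{Wehrung}, a substantial result specific to free (more generally, lower bounded) lattices. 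Second, for the failure of $\op{SD}_\meet$ you describe a strategy but produce no witness, explicitly deferring ``the technical crux.'' The paper supplies it: in $\mbf F_3$ set $y_0=y$, $z_0=z$, $y_{k+1}=y+xz_k$, $z_{k+1}=z+xy_k$, and take $X=\,\downarrow\! x$, $Y=\bigcup_k\downarrow\! y_k$, $Z=\bigcup_k\downarrow\! z_k$; then $X\meet Y=X\meet Z$ while $x(y+z)\in X\meet(Y\join Z)$ but lies in neither $X\meet Y$ nor $X\meet Z$. Note that this witness does not involve join-irreducible non-completely-join-irreducible elements in the way you anticipate; the non-principal ideals $Y$ and $Z$ arise from an increasing spiral of joins, which is the same non-terminating phenomenon that defeats your $\op{SD}_\join$ lifting argument.
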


\begin{proof}
    Whitman's condition for $\op{Id}(\mbf F_n)$ follows from Lemma~\ref{corollary:bhp}.
    Join semidistributivity of $\op{Id}(\mbf F_n)$ is proved by F. Wehrung in Corollary~5.4 of \cite{Wehrung}; see the comment immediately after the corollary. 
    By Theorem ~\ref{theorem:bh} these properties of $\op{Id}(\mbf F_n)$ are inherited by its retract $\mbf H_n$.  
    Moreover, the construction of $\mbf H_n$ is self-dual, so it also satisfies $\op{SD}_\meet$.

    It remains to show that $\op{Id}(\mbf F_n)$ fails meet semidistributivity.  In $\mbf F_3$, let
    \begin{align*}
        y_0 &= y     &\qquad   z_0 &= z  \\
        y_{k+1} &= y + xz_k  &\qquad  z_{k+1} &= z + xy_k 
    \end{align*}
    In $\op{Id}(\mbf F_3)$, let $X = \downarrow\! x$, 
    $Y = \bigcup_{k \geq 0} \downarrow\! y_k$, and
    $Z = \bigcup_{k \geq 0} \downarrow\! z_k$.
    Then $X \meet Y = X \meet Z < X \meet (Y \join Z)$,
    since $x(y+z)$ is in the latter but not the first two.    
\end{proof}

\end{document}